\newcommand{\R}{{\mathbb R}}
\newcommand{\N}{{\mathbb N}} 
\newcommand{\T}{{\mathbb T}} 
\newcommand{\C}{{\mathbb C}}
\newcommand{\Lin}{\mathcal{L}}
\newcommand{\Hin}{\mathcal{H}}
\newcommand{\Win}{\mathcal{W}}
 \renewcommand{\geq }{\geqslant}
 \renewcommand{\leq }{\leqslant}
\DeclarePairedDelimiter{\abs}{\lvert}{\rvert}
\DeclarePairedDelimiter{\norma}{\lVert}{\rVert} 
\newcommand{\Rn}{{\mathbb R^n}}
\newenvironment{sistema}%
{\left\lbrace\begin{array}{@{}l@{}}}%
{\end{array}\right.}  
\numberwithin{equation}{section}
\newtheorem{theorem}{Theorem}[section]
\newtheorem{corollary}[theorem]{Corollary}
\newtheorem{lemma}[theorem]{Lemma}
\newtheorem{proposition}[theorem]{Proposition}
\theoremstyle{definition} 
\newtheorem{definition}[theorem]{Definition}
\newtheorem{remark}[theorem]{Remark}
\begin{document} 

\title[$H^1$-scattering for NLS-systems in low dimensions]{$H^1$-scattering for systems of $N$-defocusing weakly coupled NLS equations in low space dimensions}


\author{B.~Cassano}
\address{Biagio Cassano: SAPIENZA Universit$\grave{\text{a}}$ di Roma, Dipartimento di Matematica, P.le A. Moro 5, 00185-Roma, Italy}
\email{cassano@mat.uniroma1.it}

\author{M.~Tarulli}
\address{Mirko Tarulli: Universit$\grave{\text{a}}$ Degli Studi di Pisa, Dipartimento di Matematica,
Largo Bruno Pontecorvo 5 I - 56127 Pisa. Italy}
\email{tarulli@mail.dm.unipi.it}

\subjclass[2010]{35J10, 35Q55, 35G50, 35P25.}
\keywords{Nonlinear Schr\"odinger systems, scattering theory, weakly coupled equations}

\thanks{
The authors were supported by the Italian project FIRB 2012 {\it Dispersive Dynamics: Fourier Analysis and Variational Methods.}
}

\begin{abstract}
We prove that the scattering operators and wave operators  are well-defined in the energy space for the system of defocusing Schr\"odinger equations 
\begin{equation*}
\begin{cases}
i\partial_t u_\mu + \Delta u_\mu -
\displaystyle{\sum_{\substack{\mu,\nu=1 }}^N}
\beta_{\mu\nu}|u_\nu|^{p+1}|u_\mu|^{p-1}u_\mu=0, 
\quad\quad \mu=1,\dots,N, \\
(u_\mu(0,\cdot))_{\mu=1}^N= (u_{\mu,0})_{\mu=1}^N \in H^1(\R^d)^N.
\end{cases}
\end{equation*}
with $N\geq 2$, $\beta_{\mu\nu} \geq 0$, $\beta_{\mu\mu}\neq 0$
for $p>2 $ if $d=1$, $p>1 $ if $d=2$ and $ 1 \leq p < 2$ if $d=3$.
\end{abstract}

\date{\today}
\maketitle

\section{Introduction}\label{sec:introduction}

The main object of the paper is the study of the decaying and scattering properties of the solution to the following system of $N\geq 2$ defocusing nonlinear Schr\"odinger equations in dimension $1\leq d \leq 3 $:
\begin{equation}\label{eq:nls}
\begin{cases}
i\partial_t u_\mu + \Delta u_\mu -
\displaystyle{\sum_{\substack{\mu,\nu=1 }}^N}
\beta_{\mu\nu}|u_\nu|^{p+1}|u_\mu|^{p-1}u_\mu=0, 
\quad\quad \mu=1,\dots,N, \\
(u_\mu(0,\cdot))_{\mu=1}^N= (u_{\mu,0})_{\mu=1}^N \in H^1(\R^d)^N.
\end{cases}
\end{equation}
Here, for all $\mu,\nu=1,\dots,N$, $u_\mu=u_\mu(t,x):\R\times\R^d\to\C$, $(u_\mu)_{\mu=1}^N=(u_1,\dots, u_N)$ and  $\beta_{\mu\nu}\geq0$, $\beta_{\mu\mu}\neq 0$
 are coupling parameters, moreover we require that the nonlinearity parameter $p$ satisfies the following condition
\begin{align}\label{eq:base}
&1 \leq p<p^*(d), \ \ \ \  p^*(d)=
\begin{cases}
+\infty \ \ \ \ \ \, & \text{if} \ \ \ d=1,2, \\
2  \ \ \ \  \  \  &\text{if} \ \ \ d=3.
\end{cases},  \\
&  \frac{2}{d}< p. \label{eq:al}
\end{align}
Take into account that the power nonlinearity $p^*(d)$ corresponds to the  $H^1$-critical exponent for the single NLS in $\R^{d}$, while the lower bound $\max(1, \frac 2 d)$ arises from technical limitations associated to the well-posedness in the product space $H^1(\R^d)^N$ for the solutions to \eqref{eq:nls}, as we see later in the Remarks \ref{beta0} and \ref{decaylim}. 
There is a vast literature regarding the global well-posedness theory as well as the 
bound state theory for the problem \eqref{eq:nls}, on the other hand
the system of Schr\"odinger equations   plays an important role in many models of mathematical physics: 
it describes the interactions of $M$--wave packets, the nonlinear waveguides, 
the optical pulse propagation in birefringent fibers, 
the propagation of polarized laser beam in Kerr-like photorefractive media and in the Bose-Einstein condensates theory, 
just to name a few. We remand to \cite{Col}, \cite{FaMo}, \cite{Pomponio}, \cite{MMP} and \cite{FaLuMo}
in the case $N=2$ and to \cite{LinWei} and \cite{NTDS} in the general case $N\geq 2$
for a complete set of references both on mathematical and on physical setting and applications. We analyze here the scattering in $H^1(\R^d)^N$ for \eqref{eq:nls}, in analogy with the case of the single defocusing Schr\"odinger equation
\begin{equation}\label{eq:nle}
  \begin{sistema}
    i \partial_t u + \Delta u - \abs{u}^{2p}u=0\\
    u(0)=u_0 \in H^1(\R^d),
  \end{sistema}
\end{equation}
with $u : \R \times \R^d \to \C$ and $p>0$, following the spirit of the paper \cite{Vis}. The basic new idea in this paper, among other results, is to use the interaction Morawetz etimates for exploiting the decay of $L^q$-norms of the solutions to \eqref{eq:nle} as $t\rightarrow \pm \infty$, provided $2<q<6$ for $d=3$ and $2<q<\infty$ for $d=1,2$. Our main aim is to present an analogous behavior for \eqref{eq:nls}, 
introducing some novelties. Namely, in a first step we perform new Morawetz identities, interaction Morawetz identities (which appear, at least in the seminal form and for a single NLS in the paper \cite{PlVe}) and their corresponding inequalities for the system \eqref{eq:nls} easing the proofs given in \cite{Vis}. Then by localizing the nonlinear part of Morawetz inequalities above on space-time cubes we are in position, as a second step, to give a contradiction argument which enable us to say that the solutions $(u_{\mu})_{\mu=1}^N$ behave exactly as in Theorem 0.1 in \cite{Vis}. Such as phenomenon, in combination with a generalization of the nonlinear theory developed in \cite{Ca} to the system on coupled NLS, brings to have asymptotic completeness and existence of the full wave operators in the energy space $H^1(\R^d)^N$ for solution to \eqref{eq:nls}. We emphasize that our result rely on an argument which yields the asymptotics in a single stroke and which does not distinguish the number $N$ of coupled equations. In fact, by  writing the linear part of the interaction Morawetz in a appropriate form and dealing only with its nonlinear part, it is possible to overcome the mathematical difficulties, and moreover to provide a further simple proof of scattering results appearing in \cite{GiVel2},  \cite{PlVe} and specially in \cite{Nakanishi1}. In this last paper, the author produces a
set of weighted Morawetz estimate and uses the \emph{separation of localized energy method} to achieve that the wave operators and the scattering 
operators for \eqref{eq:nle} when $d=1,2$ are well-defined and bijective in $H^1$, but this is very difficult to extend to a system of coupled NLS. 
\\

We are in position now to enter in the heart of matter by stating the main result of this paper, that is 
\begin{theorem}\label{thm:main}
  Let $1\leq d \leq 3$, $p\in\R$ such that \eqref{eq:base}, \eqref{eq:al} hold,
  then:
  \begin{itemize}
  \item\emph{(asymptotic completeness)} If $(u_{\mu,0})_{\mu=1}^N \in H^1(\R^d)^N,$ then the unique global solution to \eqref{eq:nls} 
    $(u_{\mu})_{\mu=1}^N \in\mathcal C(\R, H^1(\R^d)^N)$ 
    scatters, i.e. there exist $(u_{\mu,0}^{\pm})_{\mu=1}^N \in H^1(\R^d))^N$ such that for all $\mu=1,\dots,N$ 
    \begin{equation}\label{eq:scattering}
      \lim_{t\to\pm\infty}\left\|u_{\mu}(t,\cdot)-e^{it\Delta}u_{\mu,0}^\pm(\cdot)\right\|_{H^1}=0
      \quad
      \text{for all $\mu=1,\dots,N$}.
    \end{equation} 
  \item\emph{(existence of wave operators)} For every $(u_{\mu,0}^{\pm})_{\mu=1}^N \in H^1(\R^d)^N$ there exist unique 
    initial data $(u_{\mu,0})_{\mu=1}^N \in H^1(\R^d)^N,$ such that the
    global solution to \eqref{eq:nls} 
    $(u_{\mu})_{\mu=1}^N \in\mathcal C(\R, H^1(\R^d)^N)$  satisfies
    \eqref{eq:scattering}.
  \end{itemize}

\end{theorem}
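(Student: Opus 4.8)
The plan is to mimic the strategy of Visciglia's paper for the single defocusing NLS, but carried out simultaneously for the whole system, with the key observation being that the nonlinear coupling terms are sign-definite (since $\beta_{\mu\nu}\geq 0$) and hence contribute favorably to all the Morawetz-type quantities. I would first establish global well-posedness of \eqref{eq:nls} in $\mathcal C(\R,H^1(\R^d)^N)$: this follows from the conservation of the total mass $\sum_\mu\|u_\mu(t)\|_{L^2}^2$ and of the total energy $\sum_\mu\|\nabla u_\mu(t)\|_{L^2}^2 + \frac{2}{p+1}\int\sum_{\mu,\nu}\beta_{\mu\nu}|u_\mu|^{p+1}|u_\nu|^{p+1}$ together with local well-posedness in $H^1$ (here the constraints $p<p^*(d)$ and $p>2/d$ in \eqref{eq:base}, \eqref{eq:al} are exactly what is needed for the subcritical local theory and for the energy functional to control the $H^1$-norm), as anticipated in the Remarks referenced in the excerpt.

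Next I would derive the interaction Morawetz inequality for the system. Starting from the (second) Morawetz action built on the weight $|x-y|$ and summing over the components $\mu$, the computation produces, after integrating in time, a bound of the form
\begin{equation*}
\int_{-\infty}^{+\infty}\!\!\int_{\R^d}\sum_{\mu,\nu=1}^N \beta_{\mu\nu}|u_\mu(t,x)|^{p+1}|u_\nu(t,x)|^{p+1}\,\rho(t,x)\,dx\,dt \;\lesssim\; \Big(\sup_t \sum_\mu\|u_\mu(t)\|_{H^{1/2}}^2\Big)^2,
\end{equation*}
or in the lower-dimensional cases a bound directly on a space-time norm of $\sum_\mu|u_\mu|$; the crucial points are that all cross terms enter with nonnegative sign so nothing has to be absorbed, and that the quantity on the right is finite and uniformly bounded by the conserved mass and energy. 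Because $\beta_{\mu\mu}\neq 0$, each diagonal term $\int\!\!\int |u_\mu|^{2(p+1)}$ is individually controlled. I would phrase this step so that it is manifestly $N$-independent, exactly as the paper advertises (dealing with the linear part written in the "appropriate form" and only the nonlinear part matters).

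From the global space-time bound I would extract decay of the $L^q$-norms: following Visciglia, one localizes the nonlinear part of the Morawetz inequality over a lattice of space-time cubes, and a contradiction/pigeonhole argument shows that $\|u_\mu(t)\|_{L^q}\to 0$ as $t\to\pm\infty$ for every $\mu$ and every admissible $q$ ($2<q<6$ if $d=3$, $2<q<\infty$ if $d=1,2$). Feeding this decay into the Duhamel formula and using Strichartz estimates for the free Schrödinger group, one shows that $e^{-it\Delta}u_\mu(t)$ is Cauchy in $H^1$ as $t\to\pm\infty$, which produces the scattering data $u_{\mu,0}^\pm$ and proves \eqref{eq:scattering} — this is the asymptotic completeness half. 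For the existence of wave operators one runs the standard fixed-point argument on the integral equation with prescribed asymptotic state $e^{it\Delta}u_{\mu,0}^\pm$, again using Strichartz estimates and the subcriticality of the nonlinearity; the generalization of the nonlinear theory of \cite{Ca} to the coupled system supplies the required nonlinear estimates and the uniqueness of the data $(u_{\mu,0})_\mu$.

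The main obstacle I anticipate is the decay step in dimension $d=3$ with $1\leq p<2$: there the interaction Morawetz only controls a relatively weak space-time norm (essentially $\||u|\|_{L^4_{t,x}}$-type for the single equation), and one must carefully interpolate this with the uniform $H^1$ control — exploiting the lower bound $p>2/3$ from \eqref{eq:al} — to reach decay of $L^q$ for a range of $q$ large enough to close the Strichartz estimates; keeping track of the coupled structure while doing the localization over space-time cubes, and ensuring the contradiction argument still yields decay for every individual component $u_\mu$ (not just for the sum), is the delicate part. The one-and two-dimensional cases are comparatively softer because the Morawetz estimate is stronger there relative to the scaling, but the same localization/contradiction scheme handles all three dimensions uniformly.
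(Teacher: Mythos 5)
Your proposal follows essentially the same route as the paper: global well-posedness from the conserved mass and energy, a system interaction Morawetz estimate with weight $|x-y|$ exploiting the sign-definiteness of the $\beta_{\mu\nu}\ge 0$ coupling terms, decay of $\mathcal L^q_x$-norms via a localized Gagliardo--Nirenberg inequality and a contradiction argument on space-time cubes (using $\beta_{\mu\mu}\neq 0$ to control each diagonal term), and then Duhamel plus Strichartz for asymptotic completeness and a fixed-point argument for the wave operators. The obstacle you flag in $d=3$ is handled in the paper exactly as you anticipate, by running the contradiction argument against the weighted bilinear bound \eqref{eq:stima0} rather than the bare $L^4_{t,x}$ estimate, so no genuinely new idea is missing from your outline.
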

\begin{remark}[Case $\beta_{\mu\nu}=0$, $\mu\neq \nu$]\label{beta0} 
  If, for $\mu\neq\nu$, some of the $\beta_{\mu\nu}$  is nonvanishing,
  in order to treat the coupling nonlinearity considered in \eqref{eq:nls}, we 
  are forced to assume $p\geq1$: this excludes the analisys of the system 
  in dimension $d\geq 4$, since in this case an existence theorem is not available 
  (see Prop.~\ref{ConsLaw}). By the way, in the trivial case $\beta_{\mu\nu}=0$ 
  for all $\mu\neq\nu$, we are no longer
  obliged to assume $p\geq 1$, and hence, as a byproduct of this paper, we get 
  decay (for $0<p<2/(d-2)$) and scattering (for $2/d<p<2/(d-2)$) results for the solution to the Cauchy problem \eqref{eq:nle} in all dimensions $d \geq 1$.
  We remark that such results were already 
  established in \cite{Vis}, however our technicalities simplifies 
  some arguments present in it. Finally, we underline that this approach eases the well known results 
   \cite{Nakanishi1,Nakanishi2} for the scattering of \eqref{eq:nle} 
  in lower dimension $d=1, 2$.
\end{remark}

As far as concerns the literature, to our knowledge Morawetz and interaction Morawetz estimates
are not available in the system framework, then we conclude succinctly by recalling some of the known results, other than the already cited \cite{Vis}, \cite{Nakanishi1} and \cite{Nakanishi2} connected with the problem \eqref{eq:nle}.
This problem is classical and we remand to \cite{Ca} (and references therein) for
a complete exposition.
In order to shed light on scattering properties for solutions to \eqref{eq:nle} it is necessary 
to get fundamental tools such as the Morawetz multiplier technique and the resulting estimates.
These were obtained for the first time in \cite{Morawetz} for the Klein-Gordon equation with a general nonlinearity and 
were successively used for proving  the asymptotic completeness in \cite{LinStrauss}  for the cubic NLS in $\R^3$  and in \cite{GiVel2}
for the Schr\"odinger equation in $\R^d$ and with a pure power nonlinearity as in \eqref{eq:nle} for $2/d<p<2/(d-2)$ (that is, $L^2$-supercritical and $H^1$-subcritical). 
Recently, a new approach has simplified the proof of scattering, consisting
in getting bilinear Morawetz inequalities, also named 
interaction of quadratic Morawetz inequalities, specificly 
Morawetz estimates for two solutions (possibly the same solution taken twice) are computed
at once. We quote in this direction the papers \cite{CKSTT1}, \cite{CKSTT2}, where cubic and quintic defocusing NLS in $\R^3$ are considered, the \cite{CGT} in which interaction Morawetz and then asymptotic completeness are proved for the cubic defocusing NLS in $\R^2$, the paper
\cite{PlVe} where the interaction Morawetz estimates which do not involve the bilaplacian of the Morawetz multipliers are given for the $L^2$-supercritical and $H^1$-subcritical NLS in $\R^d$ with $d\geq1$, providing also application to various nonlinear problems also settled on $3D$ exterior domains and finally the survey \cite{GiVel} where the authors show quadratic Morawetz estimates and scattering for the NLS and the Hartree equation in the $L^2$-supercritical and $H^1$-subcritical cases.
We quote also \cite{DFVV}, \cite{FaVe}, \cite{Goncalves}, \cite{Garcia} (and references therein), where such a theory is applied considering the presence of electromagnetic potentials
and the paper \cite{TzVi}, where the interaction Morawetz technique is extended
to the partially periodic setting in the scattering analysis of the NLS posed on the product space $\R^d\times\T,$ with $d\geq1$.

\subsection*{Outline of paper.} In Section \ref{InterMor} we establish the interaction Morawetz identities 
and inequalities
(in Lemmas \ref{lem:intmor})
and the corresponding Morawetz estimates (in Propositions \ref{dim3} and \ref{dim12})
for the system of NLS \ref{eq:nls} ancillary for proving the Theorem \ref{thm:main}. 
The Section \ref{MainThm} is divided in two part: in the former we show how the interactive Morawetz inequalities 
give a relevant advantage in the exploitation of the decay of solutions to \ref{eq:nls}, this is contained in the Proposition \eqref{decay}, which has its own interest; 
in the latter we look at the existence of scattering states and wave 
operators by an extension of scattering techniques to the systems frame. 
Finally in the Appendix \ref{appendix} a generalized Gagliardo-Nirenberg inequality is obtained 
(see for instance \cite{Vis}), which is primary instrument in our paper arising from the classical Fourier theory.

\subsection*{Notations.}
For any $1\leq r\leq\infty$ we denote by $1\leq r' \leq \infty$ its H\"older conjugate exponent. We indicate by $L_x^r$ the Lebesgue space $L^r(\Rn)$, and respectively by $W^{1,r}_x$ and  $H^1_x$ the  inhomogeneous Sobolev spaces $W^{1,r}(\Rn)$ and $H^1(\Rn)$ (for more details see \cite{Ad}).
For any $N\in \N $, we also set $ \Lin_x^r= L^r(\Rn)^N$ and define 
the Sobolev spaces $\Win^{1,r}_x=W^{1,r}(\Rn)^N$ and $\Hin^1_x=H^1(\Rn)^N.$ For any differential operator $\mathscr D$ we utilize the symbol $\mathscr D_x$ (resp. $\mathscr D_y$) to explicit the dependence on the $x$ (resp. $y$) variable.

\section{Morawetz and interaction Morawetz identities}\label{InterMor}
We provide in this section the fundamental tools for the proof of our main theorem. 
We start by obtaining Morawetz-type identities, which are similar to the ones which hold for the single NLS. 
We find convenient to introduce the following notations: given a function $f\in H^1(\R^d,\C)$, we denote by
\begin{equation}\label{eq:notation}
  m_f(x):=|f(x)|^2,
  \qquad
  j_f(x):=\Im\left[\overline{f}\nabla f(x)\right]\in\C^d.
\end{equation}
We have
\begin{lemma}[Morawetz]\label{lem:mor}
Let $d\geq1$, and $(u_\mu)_{\mu=1}^N \in\mathcal C(\R,H^1(\R^d)^N)$ be a global solution to system \eqref{eq:nls},
 let $\phi=\phi(x):\R^d\to\R$ be a sufficiently regular and decaying function, and denote by
\begin{equation*}
  V(t):=\sum_{\mu=1}^N\int_{\R^n}\phi(x) \, m_{u_\mu}(x)\,dx.
\end{equation*}
The following identities hold:
\begin{align}
\dot V(t)
=&\sum_{\mu=1}^N \int_{\R^d}\phi(x) \dot m_{u_\mu}(x)\,dx
=2\sum_{\mu=1}^N \int_{\R^d} j_{u_\mu}(x)  \cdot\nabla\phi(x)\,dx
\label{eq:mor1}
\\
\ddot V(t)
=&\sum_{\mu=1}^N \int_{\R^d}\phi(x)\ddot m_{u_\mu}(x)\,dx
\label{eq:mor2}
\\
=&\sum_{\mu=1}^N\left[-\int_{\R^d} m_{u_\mu}(x)\Delta^2\phi(x)\,dx
+4  \int_{\R^d}\nabla u_{\mu}(x)D^2\phi(x)\cdot\nabla \overline u_{\mu}(x)\,dx \right]
\nonumber
\\
&
+\frac{2p}{p+1}
\sum_{\mu,\nu=1}^N \beta_{\mu\nu}\int_{\R^d}|u_{\mu}(x)|^{p+1}|u_{\nu}(x)|^{p+1}\Delta\phi(x)\,dx ,
\nonumber
\end{align}
where $D^2\phi\in\mathcal M_{n\times n}(\R^d)$ is the hessian matrix of $\phi$, and $\Delta^2\phi=\Delta(\Delta\phi)$ the bi-laplacian operator.
\end{lemma}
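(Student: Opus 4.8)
The plan is to derive both identities from the local conservation laws of mass and momentum satisfied by each component $u_\mu$, combined with integration by parts against the weight $\phi$. I would first carry out the computation for smooth, rapidly decaying solutions, for which every differentiation under the integral sign and every integration by parts is legitimate, and then recover the stated $H^1(\R^d)^N$ version by approximation, relying on the global well-posedness and the persistence of regularity recalled later in the paper.

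For \eqref{eq:mor1}, I would differentiate $V$ under the integral and insert the equation \eqref{eq:nls} in the form $\partial_t u_\mu=i\Delta u_\mu-i\sum_{\nu}\beta_{\mu\nu}|u_\nu|^{p+1}|u_\mu|^{p-1}u_\mu$ into $\dot m_{u_\mu}=2\Re(\overline{u_\mu}\,\partial_t u_\mu)$. The contribution of the nonlinearity is $-2\Re\big(i\,\overline{u_\mu}N_\mu\big)$ and vanishes identically, since $N_\mu$ is a \emph{real} multiple of $u_\mu$; the linear contribution equals $2\Re(i\,\overline{u_\mu}\Delta u_\mu)=-2\Im(\overline{u_\mu}\Delta u_\mu)=-2\dive j_{u_\mu}$. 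Integrating $\int_{\R^d}\phi\,\dot m_{u_\mu}\,dx$ by parts then gives \eqref{eq:mor1}.

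For \eqref{eq:mor2}, differentiating \eqref{eq:mor1} in time reduces the problem to computing $\partial_t j_{u_\mu}$. Feeding the equation into $j_{u_\mu}=\Im(\overline{u_\mu}\nabla u_\mu)$, this time derivative splits into a linear and a nonlinear part. The linear part is a pure divergence, whose $k$-th component is $\Re\,\partial_l\big(\overline{u_\mu}\,\partial_l\partial_k u_\mu-\partial_l\overline{u_\mu}\,\partial_k u_\mu\big)$; pairing it with $\partial_k\phi$, integrating by parts once, and using the pointwise identity $2\Re(\overline{u_\mu}\,\partial_l\partial_k u_\mu)=\partial_l\partial_k m_{u_\mu}-2\Re(\partial_k u_\mu\,\partial_l\overline{u_\mu})$, two further integrations by parts on the resulting term of type $D^2\phi:D^2 m_{u_\mu}$ reproduce exactly $-\int m_{u_\mu}\Delta^2\phi+4\int\nabla u_\mu D^2\phi\cdot\nabla\overline{u_\mu}$. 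The $k$-th component of the nonlinear part is $-m_{u_\mu}\,\partial_k c_\mu$, with $c_\mu:=\sum_{\nu}\beta_{\mu\nu}|u_\nu|^{p+1}|u_\mu|^{p-1}$; I would then integrate $-2\sum_{\mu}\int\nabla\phi\cdot m_{u_\mu}\nabla c_\mu\,dx$ by parts and use the homogeneity identity $|u_\mu|^{p-1}\nabla m_{u_\mu}=\tfrac{2}{p+1}\nabla|u_\mu|^{p+1}$ together with the symmetry $\beta_{\mu\nu}=\beta_{\nu\mu}$ of the coupling, which collapses the whole nonlinear contribution to $-\tfrac{2p}{p+1}\int\nabla\phi\cdot\nabla\big(\sum_{\mu,\nu}\beta_{\mu\nu}|u_\mu|^{p+1}|u_\nu|^{p+1}\big)$; a final integration by parts produces the announced potential term, and summing over $\mu$ closes \eqref{eq:mor2}.

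The integration-by-parts algebra is routine. I expect the main obstacle to be twofold: first, making the formal manipulations rigorous at the $H^1$ regularity of the statement, which needs a regularization of the initial data, the conservation laws for the smooth solutions, and a limiting argument controlled by the well-posedness theory; and second, the bookkeeping that assembles the $N^2$ nonlinear terms $m_{u_\mu}\nabla c_\mu$ into the single symmetric quantity $\sum_{\mu,\nu}\beta_{\mu\nu}|u_\mu|^{p+1}|u_\nu|^{p+1}$ --- precisely where the homogeneity of the coupling nonlinearity and the structure of the coupling matrix enter, and where the assumption $p\ge1$ becomes relevant.
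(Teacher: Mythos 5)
Your proposal is correct and follows essentially the same route as the paper: smooth solutions plus a final density argument, substitution of the equation into $\dot m_{u_\mu}$ and into the time derivative of the momentum density, and the symmetrization over $\mu,\nu$ combined with the homogeneity identity $|u_\mu|^{p-1}\nabla m_{u_\mu}=\tfrac{2}{p+1}\nabla|u_\mu|^{p+1}$ to produce the coefficient $\tfrac{2p}{p+1}$. The only (cosmetic) difference is the order of operations: you compute the local momentum balance $\partial_t j_{u_\mu}$ first and then pair it with $\nabla\phi$, whereas the paper integrates by parts first to obtain the multiplier $\Delta\phi\,\overline{u_\mu}+2\nabla\phi\cdot\nabla\overline{u_\mu}$ and then inserts the equation.
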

\begin{proof}
  We prove the identities for a smooth solution $(u_\mu)_\mu$, letting the general case 
  $(u_\mu)_{\mu=1}^N \in\mathcal C(\R,H^1(\R^d)^N)$ to a final standard density argument 
  (see for instance \cite{Ca}, Theorem 7.6.4, Step 2, or \cite{GiVel}, Appendix 4). 
  The equation \eqref{eq:mor1} is easy to check. We give some details for obtaining \eqref{eq:mor2}.
  By means of an integration by parts and thanks to \eqref{eq:nls}, we have for every fixed $\mu$
  \begin{equation}\label{eq:dausare1}
    \begin{split}
    &2\partial_t \int_{\R^d} j_{u_\mu}(x)\cdot \nabla \phi(x) \,dx  \\
    &\qquad =- 2\Im  \int_{\R^d} \partial_t u_{\mu}(x) [\Delta \phi(x)\bar u_{\mu}(x)+2\nabla \phi(x)\cdot \nabla \bar u_{\mu}(x)]\,dx \\
    &\qquad=2\Re \int_{\R^d} i \partial_t u_{\mu}(x) [\Delta \phi(x)\bar u_{\mu}(x)+2\nabla \phi(x)\cdot \nabla \bar u_\mu(x)]\,dx\\ 
    &\qquad=2\Re \int_{\R^d} \Big[-\Delta u_{\mu}(x) + 
    \sum_{\nu=1}^N \beta_{\mu\nu}\abs{u_\nu(x)}^{p+1}\abs{u_\mu(x)}^{p-1} u_\mu(x)\Big]\\
    &\qquad\qquad\qquad\qquad\cdot[\Delta \phi(x)\bar u_\mu(x)+2\nabla \phi(x)\cdot \nabla \bar u_\mu(x)]\,dx.
    \end{split}
  \end{equation}
  We have
  \begin{equation}\label{eq:dausare2}
    \begin{split}
    2\Re\int_{\R^d} &-\Delta u_{\mu}(x) [\Delta\phi(x)\bar u_{\mu}(x)+2\nabla \phi(x)\cdot \nabla \bar u_{\mu}(x)]\,dx\\
    &= - \int_{\R^d} \Delta^2 \phi(x) \abs{u_{\mu}(x)}^2\,dx + 4\int_{\R^d} \nabla u_{\mu}(x) D^2 \phi(x)\nabla \bar u_{\mu}(x)\,dx.
    \end{split}
  \end{equation}
Moreover
\begin{equation*}
  \begin{split}
    2\sum_{\nu=1 }^N \beta_{\mu\nu}  \Re \int_{\R^d} \abs{u_\nu}^{p+1}\abs{u_\mu}^{p-1} u_\mu(x)
    \cdot[\Delta \phi(x)\bar u_\mu(x)+2\nabla \phi(x)\cdot \nabla \bar u_\mu(x)]\,dx &\\
    = 2\sum_{\nu=1 }^N \beta_{\mu\nu}  \Re  \int_{\R^d} \abs{u_\mu u_\nu}^{p+1}\Delta \phi(x)
    +  2\nabla \phi(x) \cdot \frac{\nabla\abs{u_\mu}^{p+1}}{p+1} \abs{u_\nu}^{p+1}\,dx,&
  \end{split}
\end{equation*}
and, summing over $\nu,\mu=1,\dots,N$,
\begin{equation}
  \label{eq:dausare3}
  \begin{split}
    &2\sum_{\nu,\mu=1}^N \beta_{\mu\nu}  \Re  \int_{\R^d} \abs{u_\mu u_\nu}^{p+1}\Delta \phi(x)
    +  \nabla \phi(x) \cdot \frac{2\nabla\abs{u_\mu}^{p+1}}{p+1}\abs{u_\nu}^{p+1} \,dx  \\     
    =&\, 2\sum_{\nu,\mu=1}^N \beta_{\mu\nu} \Re  \int_{\R^d} \abs{u_\mu u_\nu}^{p+1}\Delta \phi(x) 
    +\nabla \phi(x) \cdot \frac{\nabla(\abs{u_\mu}^{p+1}  \abs{u_\nu}^{p+1})}{p+1} \,dx  \\
    =&\, 2\sum_{\nu,\mu=1}^N \beta_{\mu\nu}\left(1 - \frac{1}{p+1}\right) 
      \Re  \int_{\R^d} \abs{u_\mu u_\nu}^{p+1}\Delta \phi(x)  \,dx,
  \end{split}
\end{equation}
where in the last equality we have used integration by parts.
Taking in account \eqref{eq:dausare1}, \eqref{eq:dausare2}, summing
over $\mu=1,\dots,N$, and considering \eqref{eq:dausare3}, we get the thesis.
\end{proof}

By means of the previous Lemma, we can now prove the following interaction Morawetz identities.

\begin{lemma}[Interaction Morawetz]\label{lem:intmor}
  Let $(u_\mu)_{\mu=1}^N \in\mathcal C(\R, H^1(\R^d)^N)$ be a global solution to system \eqref{eq:nls}, let $\phi=\phi(|x|):\R^d\to\R$ be a convex radial function, regular and decaying enough, and denote by $\psi=\psi(x,y):=\phi(|x-y|):\R^{2d}\to\R$,
\begin{equation*}
  I(t):=\sum_{\mu,\kappa=1}^N\int_{\R^d}\int_{\R^d}\psi(x,y)m_{u_\mu}(x)m_{u_\kappa}(y)\,dx\,dy.
\end{equation*}
The following holds:
\begin{align}
  \dot I(t)
  &
  =2\sum_{\mu,\kappa=1}^N\int_{\R^d}\int_{\R^d}j_{u_\mu}(x)\cdot\nabla_x\psi(x,y) \, m_{u_\kappa}(y)\,dxdy,
  \label{eq:intmor1}
  \\
  \ddot I(t)
  &
  \geq
  2\sum_{\mu,\kappa=1}^N\int_{\R^d}\int_{\R^d}
  \Delta_x\psi(x,y) \nabla_xm_{u_\mu}(t,x)\cdot\nabla_y m_{u_\kappa}(t,y)\,dxdy+N_{(p,\psi)},
  \label{eq:intmor2}
\end{align}
with
\begin{equation}
\begin{split}
N_{(p,\psi)}=
&
\frac {4p}{p+1}\sum_{\substack{\mu,\nu,\kappa=1}}^N\beta_{\mu\nu}\int_{\R^d}\int_{\R^d}
  |u_\mu(x)|^{p+1}|u_\nu(x)|^{p+1}m_{u_\kappa}(y)\Delta_x\psi(x,y)\,dxdy.
  \label{eq:nonlin}
\end{split}
\end{equation}
\end{lemma}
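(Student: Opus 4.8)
The plan is to regard $I(t)$ as the integral of $\psi$ against the product of the two one-particle mass densities $m_{u_\mu}(x)$ and $m_{u_\kappa}(y)$, and to reduce every computation to the single-equation identities of Lemma~\ref{lem:mor}, up to genuinely bilinear ``cross'' terms. As in that proof, I would first establish the identities for smooth, sufficiently decaying solutions $(u_\mu)_\mu$ — so that all the integrations by parts below produce no boundary terms — and then pass to general $(u_\mu)_{\mu=1}^N\in\mathcal C(\R,H^1(\R^d)^N)$ by the same density argument (see \cite{Ca}, Theorem~7.6.4, or \cite{GiVel}, Appendix~4).

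For \eqref{eq:intmor1} I would differentiate under the integral sign, obtaining two terms according to whether $\partial_t$ hits $m_{u_\mu}(x)$ or $m_{u_\kappa}(y)$. Each is handled via the pointwise local conservation of mass $\partial_t m_{u_\mu}=-2\,\dive_x j_{u_\mu}$ — the differential identity underlying \eqref{eq:mor1}, valid because the nonlinearity in \eqref{eq:nls} is gauge invariant, so that $\Im\!\big(\bar u_\mu\sum_\nu\beta_{\mu\nu}|u_\nu|^{p+1}|u_\mu|^{p-1}u_\mu\big)=0$ — followed by one integration by parts in the relevant variable. Since $\psi(x,y)=\phi(|x-y|)=\psi(y,x)$, relabelling $\mu\leftrightarrow\kappa$, $x\leftrightarrow y$ identifies the two terms, which gives \eqref{eq:intmor1}.

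For \eqref{eq:intmor2} I would differentiate once more and expand $\partial_t^2\big(m_{u_\mu}(x)m_{u_\kappa}(y)\big)=\ddot m_{u_\mu}(x)\,m_{u_\kappa}(y)+2\,\dot m_{u_\mu}(x)\,\dot m_{u_\kappa}(y)+m_{u_\mu}(x)\,\ddot m_{u_\kappa}(y)$. The two ``diagonal'' blocks I would treat by invoking the one-particle identity \eqref{eq:mor2} with the multiplier $\phi$ replaced by $\psi(\cdot,y)$, the variables $y,\kappa$ frozen and $m_{u_\kappa}(y)$ carried as a weight; summing over $\kappa$, integrating in $y$ and using $\psi(x,y)=\psi(y,x)$ to equate the two blocks, this produces exactly the nonlinear term $N_{(p,\psi)}$ of \eqref{eq:nonlin} — the doubling accounting for the passage from $\tfrac{2p}{p+1}$ in \eqref{eq:mor2} to $\tfrac{4p}{p+1}$ in \eqref{eq:nonlin}, with the $\beta_{\mu\nu}$-symmetrization re-run as in the derivation of \eqref{eq:mor2} — together with the linear (quadratic in $u_\mu$, quadratic in $u_\kappa$) contribution $2\sum_{\mu,\kappa}\int\int\big[-\Delta_x^2\psi\,m_{u_\mu}(x)+4\,\nabla_x u_\mu\,D^2_x\psi\,\nabla_x\bar u_\mu\big]m_{u_\kappa}(y)$. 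The ``cross'' block $2\sum_{\mu,\kappa}\int\int\psi\,\dot m_{u_\mu}(x)\,\dot m_{u_\kappa}(y)$ carries no $p$-dependence; inserting $\dot m=-2\,\dive j$ in each variable, integrating by parts in $x$ and in $y$, and using that $\partial_{x_a}\partial_{y_b}\psi=-(D^2_x\psi)_{ab}$ (since $\psi$ depends only on $x-y$), I would bring it to the shape $-8\sum_{\mu,\kappa}\int\int j_{u_\mu}(x)\,D^2_x\psi\,j_{u_\kappa}(y)$.

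It then remains to reorganise the whole linear part. First, integrating $-\Delta_x^2\psi\,m_{u_\mu}(x)m_{u_\kappa}(y)$ by parts once in $x$ and once in $y$ (using $\nabla_x\Delta_x\psi=-\nabla_y\Delta_x\psi$) turns the bilaplacian term precisely into $2\sum_{\mu,\kappa}\int\int\Delta_x\psi\,\nabla_x m_{u_\mu}(x)\cdot\nabla_y m_{u_\kappa}(y)$, the right-hand side of \eqref{eq:intmor2}. Second, using the pointwise identity $\nabla_x u_\mu\,D^2_x\psi\,\nabla_x\bar u_\mu=\tfrac1{4m_{u_\mu}}\nabla_x m_{u_\mu}\,D^2_x\psi\,\nabla_x m_{u_\mu}+\tfrac1{m_{u_\mu}}j_{u_\mu}\,D^2_x\psi\,j_{u_\mu}$ (the quotients extended by $0$ on $\{u_\mu=0\}$) and symmetrising in $(x,\mu)\leftrightarrow(y,\kappa)$, the $j_{u_\mu}(x)$--$j_{u_\kappa}(y)$ part of the Hessian terms combines with the cross block into $4\sum_{\mu,\kappa}\int\int\big(j_{u_\mu}(x)m_{u_\kappa}(y)-j_{u_\kappa}(y)m_{u_\mu}(x)\big)D^2_x\psi\big(j_{u_\mu}(x)m_{u_\kappa}(y)-j_{u_\kappa}(y)m_{u_\mu}(x)\big)$ divided by $m_{u_\mu}(x)m_{u_\kappa}(y)$, which is $\geq0$ by convexity of $\phi$; this is the point where one follows \cite{PlVe}, arranging matters so that no $\Delta^2\psi$ survives. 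The leftover term $2\sum_{\mu,\kappa}\int\int\tfrac{1}{m_{u_\mu}(x)}\nabla_x m_{u_\mu}\,D^2_x\psi\,\nabla_x m_{u_\mu}\,m_{u_\kappa}(y)$ is likewise $\geq0$. Discarding these two nonnegative contributions and keeping $N_{(p,\psi)}$ yields \eqref{eq:intmor2}. I expect this reorganisation to be the only real obstacle: it is essentially bookkeeping, but one must track the constants through the successive integrations by parts and the doublings, redo the $\beta_{\mu\nu}$-symmetrization, and spot the exact combination that completes into a square so that the remainder is precisely the stated gradient--gradient term; nothing beyond Lemma~\ref{lem:mor} and the convexity of $\phi$ enters.
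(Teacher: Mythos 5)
Your proposal is correct, and its overall architecture coincides with the paper's: same expansion of $\ddot I$ into two diagonal blocks plus a cross block, same use of the one-particle identity \eqref{eq:mor2} with multiplier $\psi(\cdot,y)$ weighted by $m_{u_\kappa}(y)$ (whence the doubling $\tfrac{2p}{p+1}\to\tfrac{4p}{p+1}$ in $N_{(p,\psi)}$), and the same integration by parts via $\nabla_x\psi=-\nabla_y\psi$ turning $-\Delta_x^2\psi\,m_{u_\mu}(x)m_{u_\kappa}(y)$ into the gradient--gradient term of \eqref{eq:intmor2} (this is exactly the paper's identity \eqref{eq:equivalence}). The one genuine divergence is the positivity of the leftover block $B$ (Hessian terms plus cross term). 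The paper completes squares with the two-point quantities $C^{\mu\mu}_j,D^{\mu\mu}_j$ and $E^{\mu\kappa}_j,F^{\mu\kappa}_j$ of the form $u_\mu(x)\partial_{y_j}\overline{u_\kappa(y)}\pm\partial_{x_j}u_\mu(x)\,\overline{u_\kappa(y)}$, obtaining $B$ as a sum of Hessian quadratic forms $\partial^2_{x_jx_k}\phi\,[C_j\overline{C_k}+D_j\overline{D_k}]$ with no division anywhere. You instead use the hydrodynamic splitting $\Re(\nabla u\,D^2\psi\,\nabla\bar u)=\tfrac1{4m}\nabla m\,D^2\psi\,\nabla m+\tfrac1m\,j\,D^2\psi\,j$ followed by a weighted Cauchy--Schwarz producing the square $(j_{u_\mu}(x)m_{u_\kappa}(y)-j_{u_\kappa}(y)m_{u_\mu}(x))$; I checked the constants and they come out right ($8\,j\,D^2_{xy}\psi\,j=-8\,j\,D^2_x\psi\,j$ combines exactly with the two $\tfrac1m jD^2j$ terms into $4$ times the normalized square). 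Both routes reduce positivity to convexity of $\phi$; yours is the Planchon--Vega/CKSTT-style argument and buys a perhaps more transparent "momentum defect" interpretation, at the price of the quotients $1/m_{u_\mu}$, which you must (and do) handle on the nodal set $\{u_\mu=0\}$ -- there $j_{u_\mu}=0$, so the surviving terms are manifestly nonnegative and the inequality persists by a limiting argument. The paper's completion of squares avoids this issue entirely, which is the only respect in which its version is cleaner.
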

\begin{proof}
 As for the previous lemma, we prove the identities for a smooth solution $(u_\nu)_{\nu=1}^N$, 
 letting the general case $(u_\mu)_{\mu=1}^N \in\mathcal C(\R,H^1(\R^d)^N)$ to a final standard density argument.
  First one has 
  \begin{align}\label{eq:mor0}
    \dot I(t) =& \sum_{\mu,\kappa=1}^N\int_{\R^d}\int_{\R^d}
 \left( \dot m_{u_\mu}(x)m_{u_\kappa}(y)+ m_{u_\mu}(x) \dot m_{u_\kappa}(y)\right)\psi(x,y)\,dx\,dy,
  \end{align}
  then, due to the symmetry of $\psi(x,y)=\phi(|x-y|)$,
  we obtain that the equality above is equivalent to 
  \begin{align*}
    \dot I(t) =2 \sum_{\mu,\kappa=1}^N \int_{\R^d}\int_{\R^d}
  \dot m_{u_\mu}(x)m_{u_\kappa}(y)\psi(x,y)\,dx\,dy.
  \end{align*}
  Therefore, \eqref{eq:intmor1} immediately follows by \eqref{eq:mor1} and the Fubini's Theorem.
  Analogously, we can differentiate again and get the identity
  \begin{align}
  \ddot I(t)
  =
  \sum_{\mu,\kappa=1}^N
  \int_{\R^d}\int_{\R^d}
  \ddot m_{u_\mu}(x)m_{u_\kappa}(y)\psi(x,y)\,dx\,dy
   \nonumber&
    \\
   +\sum_{\mu,\kappa=1}^N\int_{\R^d}\int_{\R^d}
  m_{u_\mu}(x) \ddot m_{u_\kappa}(y)\psi(x,y)\,dx\,dy&
    \label{eq:1}
    \\
    +2\sum_{\mu,\kappa=1}^N\int_{\R^d}\int_{\R^d}
 \dot m_{u_\mu}(x) \dot m_{u_\kappa}(y)\psi(x,y)\,dx\,dy.&
    \nonumber
  \end{align}
  We can write $\ddot I(t):=A+B$: by \eqref{eq:mor2}, an application of the Fubini's Theorem and using once again the
  symmetry of $\psi(x,y)$ we are allowed to set
  \begin{align}
  A =
  -2\sum_{\mu,\kappa=1}^N\int_{\R^d}\int_{\R^d}
  m_{u_\mu}(x)m_{u_\kappa}(y)\Delta^2_x\psi(x,y)\,dxdy&
   \label{eq:2iniz}
  \\
  +\frac{4p}{p+1}\sum_{\mu,\kappa=1}^N\beta_{\mu\mu}\int_{\R^d}\int_{\R^d}
  |u_\mu(x)|^{2p+2}m_{u_\kappa}(y)\Delta_x\psi(x,y)\,dxdy&
  \nonumber
  \\
   \ \ \ +\frac{4p}{p+1}\sum_{\substack{\mu,\nu,\kappa=1\\
  \mu\neq\nu}}^N\beta_{\mu\nu}\int_{\R^d}\int_{\R^d}
  |u_\mu(x)|^{p+1}|u_\nu(x)|^{p+1}m_{u_\kappa}(y)\Delta_x\psi(x,y)\,dxdy,&
  \nonumber
  \end{align}
notice that the second and third line of the \eqref{eq:2iniz} above are sum of terms coming from
the nonlinearity in the equation, while the
r.h.s. of the first line consists of sums of terms related to the linear part of the equation. 
We reshape the linear term in the previous identity \eqref{eq:2iniz} as follows
  \begin{equation}\label{eq:equivalence}
    \begin{split}
    &-2\sum_{\mu,\kappa=1}^N\int_{\R^d}\int_{\R^d} m_{u_\mu}(t,x)m_{u_\kappa}(t,y)\Delta^2\psi(x,y)\,dxdy\\
    =&2\sum_{\mu,\kappa=1}^N\int_{\R^d}\int_{\R^d} m_{u_\mu}(t,x)m_{u_\kappa}(t,y)\partial_{x_i}\partial_{y_i}\Delta\psi(x,y)\,dxdy\\
    =&2\sum_{\mu,\kappa=1}^N\int_{\R^d}\int_{\R^d} \partial_{x_i} m_{u_\mu}(t,x)\partial_{y_i}m_{u_\kappa}(t,y)\Delta\psi(x,y)\,dxdy,
    \end{split}
  \end{equation}
applying integration by parts (with no boundary terms)
and using the property $\partial_{x_k}\psi=-\partial_{y_k}\psi.$
In conclusion, we get
\begin{equation}\label{eq:2}
\begin{split}
A = 2\sum_{\mu,\kappa=1}^N\int_{\R^d}\int_{\R^d}
  \Delta_x\psi(x,y) \nabla_xm_{u_\mu}(t,x)\cdot\nabla_y m_{u_\kappa}(t,y)\,dxdy+N_{(p,\psi)}.
\end{split}
\end{equation}
Moreover by \eqref{eq:mor1}, \eqref{eq:mor2} and the Fubini's Theorem we  introduce
  \begin{align}
  B =
  &
4\sum_{\mu,\kappa=1}^N\int_{\R^d}\int_{\R^d}
  \nabla u_\mu(x)D^2_x\psi(x,y)\nabla\overline u_\mu(x)
  m_{u_\kappa}(y)\,dxdy
  \nonumber
   \\
  &
   +4\sum_{\mu,\kappa=1}^N\int_{\R^d}\int_{\R^d}
  m_{u_\mu}(x)\nabla u_\kappa(y)D^2_y\psi(x,y)\nabla\overline u_\kappa(y)
  \,dxdy
  \nonumber
  \\
  &
  +8\sum_{\mu,\kappa=1}^N\int_{\R^d}\int_{\R^d}
  j_{u_\mu}(x)D^2_{xy}\psi(x,y)\cdot j_{u_\kappa}(y)\,dxdy,
  \nonumber
  \end{align}
here we used, at least at this level, the symmetry of $D^2 \psi$ to eliminate the real part condition in the first two 
summands of the equality above.  Let us focalize on $B:$ it is the sum of two terms, $B_{\mu=\kappa}, $ and $B_{\mu\neq\kappa}$. 
We deal with each of them separately, then we start with the summand with $\mu=\kappa$ that is
 
 \begin{align}\label{eq:b1a}
 B_{\mu=\kappa}=\sum_{\mu=1}^N B^{\mu\mu},&
\end{align}
where, for each $\mu=1,...,N$ the $B^{\mu\mu}$ term is defined by the chain of equalities
\begin{align}\label{eqb1aI}
B^{\mu\mu} = & \,4\int_{\R^d}\int_{\R^d}
  m_{u_{\mu}}(x)\nabla_yu_{\mu}(y)D^2_y\psi(x,y)\nabla_y\overline u_{\mu}(y)\,dxdy \\
  &+ 4\int_{\R^d}\int_{\R^d}
 m_{u_{\mu}}(y) \nabla_xu_{\mu}(x)D^2_x\psi(x,y)\nabla_x\overline u_{\mu}(x)\,dxdy \nonumber\\
  &+8\int_{\R^d}\int_{\R^d}
  j_{u_{\mu}}(x)D^2_{xy}\psi(x,y)\cdot j_{u_{\mu}}(y)\,dxdy\nonumber\\
   =&4\sum_{j,k=1}^d\int_{\R^d}\int_{\R^d}
  |u_{\mu}(x)|^2\partial_{y_j}u_{\mu}(y)\partial^2_{y_jy_k}\phi(|x-y|)\partial_{y_k}\overline u_{\mu}(y)\,dxdy
  \nonumber\\
  &+
  4\sum_{j,k=1}^d\int_{\R^d}\int_{\R^d}
  |u_{\mu}(y)|^2\partial_{x_j}u_{\mu}(x)\partial^2_{x_jx_k}\phi(|x-y|)\partial_{x_k}\overline u_{\mu}(x)\,dxdy
  \nonumber
  \\
  &+8\sum_{j,k=1}^d\int_{\R^d}\int_{\R^d}
  \Im(\overline u_{\mu}(x)\partial_{x_j}u_{\mu}(x))\partial^2_{x_jy_k}\phi(|x-y|)
  \Im(\overline u_{\mu}(y)\partial_{y_k}u_{\mu}(y))\,dxdy.
  \nonumber
  \end{align}
Since $\partial_{x_j}\psi = -\partial_{y_j}\psi$, for all $j=1,\dots,n$, one can check (after a rearrangement) that the last identity of the \eqref{eq:b1a} above is equal to
   \begin{align}\label{eq:b1b}
  &
 - 4\sum_{j,k=1}^d\int_{\R^d}\int_{\R^d}\partial^2_{x_jy_k}\phi(|x-y|)|u_{\mu}(x)|^2
 \Re(\partial_{y_j}u_{\mu}(y)\partial_{y_k}\overline u_{\mu}(y))\,dxdy
  \\
  &
  -
  4\sum_{j,k=1}^d\int_{\R^d}\int_{\R^d}
  \partial^2_{x_jy_k}\phi(|x-y|)|u_{\mu}(y)|^2\Re(\partial_{x_j}u_{\mu}(x)\partial_{x_k}\overline u_{\mu}(x))\,dxdy
  \nonumber
  \\
  &
  +8\sum_{j,k=1}^d\int_{\R^d}\int_{\R^d}
  \Im(\overline u_{\mu}(x)\partial_{x_j}u_{\mu}(x))\partial^2_{x_jy_k}\phi(|x-y|)
  \Im(\overline u_{\mu}(y)\partial_{y_k}u_{\mu}(y))\,dxdy,
  \nonumber
  \end{align}
  and finally to
  \begin{align}
   &
  =-2\Big[\sum_{j,k=1}^d\int_{\R^d}\int_{\R^d} \partial^2_{x_jy_k}\phi(|x-y|) |u_{\mu}(x)|^2(\partial_{y_j}u_{\mu}(y)\partial_{y_k}\overline u_{\mu}(y)+\partial_{y_j}\overline u_{\mu}(y)\partial_{y_k}\overline u_{\mu}(y))\,dxdy
  \nonumber
  \\
   &
+\sum_{j,k=1}^d\int_{\R^d}\int_{\R^d}
  \partial^2_{x_jy_k}\phi(|x-y|)|u_{\mu}(y)|^2(\partial_{x_j}u_{\mu}(x)\partial_{x_k}\overline u_{\mu}(x)+\partial_{x_j}\overline u_{\mu}(x)\partial_{x_k}u_{\mu}(x))\,dxdy
   \nonumber
  \\
  &
  +\sum_{j,k=1}^d
  \int_{\R^d}\int_{\R^d}
   \partial^2_{x_jy_k}\phi(|x-y|) 
   (\overline u_{\mu}(x)\partial_{x_j}u_{\mu}(x)- u_{\mu}(x)\partial_{x_j}\overline u_{\mu}(x))
   \nonumber
   \\
   & \quad\quad\quad\quad\quad\quad\quad\quad\quad\quad\quad\quad
   \cdot(\overline u_{\mu}(y)\partial_{y_k}u_{\mu}(y)- u_{\mu}(y)\partial_{y_k}\overline u_{\mu}(y))\,dxdy\Big].
  \nonumber
  \end{align}
 
  If we set 
   \begin{align*}
    C^{\mu\mu}_j 
    &
    :=u_{\mu}(t,x)\partial_{y_j}\overline{u_{\mu}(t,y)}+\partial_{x_j}u_{\mu}(t,x)\overline{u_{\mu}(t,y)},
    \\
    D^{\mu\mu}_j 
    &
    :=u_{\mu}(t,x)\partial_{y_j}u_{\mu}(t,y)-\partial_{x_j}u_{\mu}(t,x)u_{\mu}(t,y),
  \end{align*}
  then by gathering \eqref{eq:b1a} and \eqref{eq:b1b} we earn   
  
   \begin{equation}\label{eq:b1f}
   B_{\mu=\kappa}
  =2\sum_{\mu=1}^N\sum_{j,k=1}^d{\textstyle \int_{\R^d}\int_{\R^d}
  \partial^2_{x_jx_k}\phi(|x-y|)\left[C^{\mu\mu}_j\overline{C^{\mu\mu}_k}+D^{\mu\mu}_j\overline{D^{\mu\mu}_k}\right]\,dxdy.}
  \end{equation}
  
  Take into account now the summand with $\mu\neq\kappa$ that is
  
  \begin{align}\label{eq:b2aI}
 B_{\mu\neq\kappa}=\sum_{\substack{\mu, \kappa=1\\ \mu\neq\kappa}}^N B^{\mu\kappa}&,
\end{align}
with the $B^{\mu\kappa}$ term given by
  
    \begin{align}\label{eq:b2a}
 B^{\mu\kappa}
 =
  4\int_{\R^d}\int_{\R^d}
  m_{u_\mu}(x)\nabla_yu_{\kappa}(y)D^2_y\psi(x,y)\nabla_y\overline u_{\kappa}(y) \,dxdy&
 \\
  + 4\int_{\R^d}\int_{\R^d}
 m_{u_{\kappa}}(y) \nabla_x u_\mu(x)D^2_x\psi(x,y)\nabla_x\overline u_\mu(x)\,dxdy &\nonumber\\
  -8\int_{\R^d}\int_{\R^d}
  j_{u_\mu}(x)D^2_{x}\psi(x,y)\cdot j_{u_\kappa}(y)\,dxdy&\nonumber\\
  =
   4\sum_{j,k=1}^d\int_{\R^d}\int_{\R^d}
  |u_\mu(x)|^2\partial_{y_j}u_{\kappa}(y)\partial^2_{y_jy_k}\phi(|x-y|)\partial_{y_k}\overline u_{\kappa}(y)\,dxdy&
 \nonumber \\
  +4\sum_{j,k=1}^d\int_{\R^d}\int_{\R^d}
  |u_{\kappa}(y)|^2\partial_{x_j}u_\mu(x)\partial^2_{x_jx_k}\phi(|x-y|)\partial_{x_k}\overline u_\mu(x)\,dxdy&
  \nonumber
   \\
  +8\sum_{j,k=1}^d\int_{\R^d}\int_{\R^d}
  \Im(\overline u_\mu(x)\partial_{x_j}u_\mu(x))\partial^2_{x_jy_k}\phi(|x-y|)
  \Im(\overline u_{\kappa}(y)\partial_{y_k}u_{\kappa}(y))\,dxdy,&
 \nonumber
  \end{align}

thus arguing as for the proof of \eqref{eq:b1f}, once one set
 \begin{align*}
    E^{\mu\kappa}_j 
    &
    :=u_\mu(t,x)\partial_{y_j}\overline{u_\kappa(t,y)}+\partial_{x_j}u_\mu(t,x)\overline{u_\kappa(t,y)},
    \\
    F^{\mu\kappa}_j 
    &
    :=u_\mu(t,x)\partial_{y_j}u_\kappa(t,y)-\partial_{x_j}u_\mu(t,x)u_\kappa(t,y),
  \end{align*}
we arrive at the equality
\begin{equation}\label{eq:b2f}
  B_{\mu\neq\kappa}=
  2\sum_{\substack{\mu, \kappa=1\\ \mu\neq\kappa}}^N\sum_{j,k=1}^d\int_{\R^d}\int_{\R^d}
  \partial^2_{x_jx_k}\phi(|x-y|)\left[E^{\mu\kappa}_j\overline{E^{\mu\kappa}_k}+F^{\mu\kappa}_j\overline{F^{\mu\kappa}_k}\right]\,dxdy.
   \end{equation}
  
   Therefore the identities \eqref{eq:b1f}, \eqref{eq:b2f} and the fact that $\phi$ is a convex function give $B\geq 0$. This argument implies, in combination with \eqref{eq:1}, \eqref{eq:2}, the proof of \eqref{eq:intmor2}.
  \end{proof}
  By using the identity \eqref{eq:equivalence} which appear in the proof of Lemma \ref{lem:intmor} we have an equivalent way to the \eqref{eq:intmor2} useful when the quantity $\Delta^2_x\psi(x,y)$ is nonpositive. This is contained in the following.

\begin{corollary}\label{cor:intmorbil}
 Let be $(u_\mu)_{\mu=1}^N \in\mathcal C(\R, H^1(\R^d)^N)$, $\psi=\psi(x,y)$ and $N_{(p,\psi)}$ as in Lemma \ref{lem:intmor}, then the following holds
\begin{align}
  \ddot I(t)
  &
  \geq
  -2\sum_{\mu,\kappa=1}^N\int_{\R^d}\int_{\R^d}
 m_{u_\mu}(x)m_{u_\kappa}(y)\Delta^2_x\psi(x,y)\,dxdy+N_{(p,\psi)}.
  \label{eq:intmor2a}
\end{align}
\end{corollary}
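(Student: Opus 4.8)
The plan is to recycle the decomposition $\ddot I(t)=A+B$ already obtained in the proof of Lemma \ref{lem:intmor}, but to keep the \emph{unreshaped} expression for $A$ rather than performing the final double integration by parts. Recall that in that proof the term $A$ was first written in the form \eqref{eq:2iniz}, i.e.
\[
A=-2\sum_{\mu,\kappa=1}^N\int_{\R^d}\int_{\R^d}m_{u_\mu}(x)m_{u_\kappa}(y)\Delta^2_x\psi(x,y)\,dxdy+N_{(p,\psi)},
\]
the second and third lines of \eqref{eq:2iniz} being precisely $N_{(p,\psi)}$ once one sums over $\mu,\nu=1,\dots,N$ and collects the diagonal and off-diagonal coupling contributions. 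Only afterwards was the linear part transformed through \eqref{eq:equivalence} into the form appearing in \eqref{eq:2}. For the present statement I would simply stop one step earlier, before invoking that last integration by parts in the variables $x$ and $y$.

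Next I would invoke the nonnegativity $B\geq0$, which was already established in the proof of Lemma \ref{lem:intmor} from the identities \eqref{eq:b1f}, \eqref{eq:b2f} together with the convexity hypothesis on $\phi=\phi(|x|)$. Combining $\ddot I(t)=A+B$ with $B\geq 0$ and the displayed expression for $A$ yields \eqref{eq:intmor2a} immediately.

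I do not foresee any genuine obstacle here: the statement is essentially a reformulation of intermediate computations in Lemma \ref{lem:intmor}. The only point deserving a word of care is, exactly as in Lemmas \ref{lem:mor} and \ref{lem:intmor}, that all the integrations by parts producing \eqref{eq:2iniz} are first carried out for a smooth solution and then extended to a general $(u_\mu)_{\mu=1}^N\in\mathcal C(\R,H^1(\R^d)^N)$ by the same density argument. The interest of the corollary lies in the fact that, for suitable choices of the radial weight $\phi$, the bilaplacian $\Delta^2_x\psi=\Delta^2\phi(|x-y|)$ is nonpositive, so that \eqref{eq:intmor2a} bounds $\ddot I(t)$ from below by $N_{(p,\psi)}$ alone; this is the form that will be used in Propositions \ref{dim3} and \ref{dim12}.
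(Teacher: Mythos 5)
Your proposal is correct and coincides with the paper's own justification: the corollary is obtained precisely by retaining the linear term of $A$ in the form \eqref{eq:2iniz} (i.e. before the integration by parts \eqref{eq:equivalence} that produces \eqref{eq:2}) and combining it with the nonnegativity $B\geq 0$ already established in Lemma \ref{lem:intmor}. Your remarks on the density argument and on the intended use when $\Delta^2_x\psi\leq 0$ also match the paper's reading of the result.
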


As an immediate consequence of Lemma \ref{lem:intmor} (and of Corollary \ref{cor:intmorbil}), we can now prove the following result.
\begin{proposition}\label{dim3}
  Let $d=3$, $p \in \R$ such that \eqref{eq:base} holds, 
  and let $(u_\mu)^N_{\mu=1}\in\mathcal C(\R,H^1(\R^3)^N)$ be a global solution to \eqref{eq:nls}. Then one has

\begin{align}
  &
  \sum_{\mu=1}^N\int_{\R}\int_{\R^3}|u_\mu(t,x)|^4\,dx\,dt <\infty,
  \label{eq:stima}
  \\
  &
  \sum_{\mu=1}^N \beta_{\mu\mu}\int_{\R}\int_{\R^3}\int_{\R^3}\frac{ |u_\mu(t,x)|^{2p+2}|u_\mu(t,y)|^2}{|x-y|}
  \,dx\,dy\,dt
  <\infty.  \label{eq:stima0}
\end{align}
 \end{proposition}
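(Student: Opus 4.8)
The plan is to feed the classical Morawetz weight $\phi(|x|)=|x|$ into the interaction Morawetz machinery just established, using the form of Corollary~\ref{cor:intmorbil} rather than \eqref{eq:intmor2} because in $d=3$ the bilaplacian of this weight is nonpositive. With $\phi(r)=r$ one has $\psi(x,y)=|x-y|$, a convex radial function, and in $\R^3$
\[
\Delta_x\psi(x,y)=\frac{2}{|x-y|}\ge 0,
\qquad
\Delta_x^2\psi(x,y)=-8\pi\,\delta(x-y)\le 0
\]
in the distributional sense. Plugging this into \eqref{eq:intmor2a}, the linear term becomes
\[
-2\sum_{\mu,\kappa=1}^N\int_{\R^3}\int_{\R^3}m_{u_\mu}(x)m_{u_\kappa}(y)\Delta_x^2\psi(x,y)\,dxdy
=16\pi\sum_{\mu,\kappa=1}^N\int_{\R^3}|u_\mu(x)|^2|u_\kappa(x)|^2\,dx
\ge 16\pi\sum_{\mu=1}^N\int_{\R^3}|u_\mu(x)|^4\,dx,
\]
keeping only the diagonal part. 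For $N_{(p,\psi)}$ in \eqref{eq:nonlin}, every summand is nonnegative since $\beta_{\mu\nu}\ge 0$ and $\Delta_x\psi\ge 0$; discarding all but the terms with $\nu=\mu=\kappa$ gives
\[
N_{(p,\psi)}\ge\frac{8p}{p+1}\sum_{\mu=1}^N\beta_{\mu\mu}\int_{\R^3}\int_{\R^3}\frac{|u_\mu(x)|^{2p+2}|u_\mu(y)|^2}{|x-y|}\,dxdy.
\]
Hence $\ddot I(t)$ dominates a fixed positive multiple of the sum of the two integrands whose space-time integrals we wish to bound.

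Next I would integrate this differential inequality over $t\in[-T,T]$ and use the fundamental theorem of calculus: $\int_{-T}^{T}\ddot I(t)\,dt=\dot I(T)-\dot I(-T)$, so the two quantities in \eqref{eq:stima}–\eqref{eq:stima0}, integrated over $[-T,T]$, are controlled by $2\sup_{t\in\R}|\dot I(t)|$. To bound the latter uniformly in $t$ I use the identity \eqref{eq:intmor1}: since $|\nabla_x\psi|=|\phi'(|x-y|)|=1$ and $|j_{u_\mu}(x)|\le |u_\mu(x)|\,|\nabla u_\mu(x)|$, the Cauchy–Schwarz inequality yields
\[
|\dot I(t)|\le 2\Big(\sum_{\mu=1}^N\|u_\mu(t)\|_{L^2}\|\nabla u_\mu(t)\|_{L^2}\Big)\Big(\sum_{\kappa=1}^N\|u_\kappa(t)\|_{L^2}^2\Big),
\]
which is finite and $t$-independent by conservation of mass and of the (nonnegative, hence $H^1$-controlling) energy; see Prop.~\ref{ConsLaw}. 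Letting $T\to\infty$ and invoking monotone convergence on the nonnegative left-hand side then gives \eqref{eq:stima} and \eqref{eq:stima0}.

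The main technical point is rigour: the weight $\phi(r)=r$ is singular at the origin and does not decay, so Lemma~\ref{lem:intmor} and Corollary~\ref{cor:intmorbil} do not apply to it verbatim. I would handle this by the standard approximation: run the argument with a family of smooth, bounded, convex, radial approximations $\phi_{\varepsilon,R}$ of $|x|$ (for instance $\sqrt{\varepsilon^2+|x|^2}$, suitably truncated for $|x|\gtrsim R$), for which all the integrations by parts in the proofs of Lemmas~\ref{lem:mor}–\ref{lem:intmor} are legitimate and $\dot I$ is well defined, and then pass to the limit $\varepsilon\to0$ followed by $R\to\infty$, using Fatou's lemma on the nonnegative right-hand side and dominated convergence on $\dot I$ together with the uniform $H^1$ bound. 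This limiting procedure — in particular identifying the contribution of $\Delta_x^2\psi$ in the limit as the Dirac mass, which is exactly what produces the $L^4$-norm and is where the dimension $d=3$ enters — is the only delicate step; the rest is a routine combination of the identities of Section~\ref{InterMor} with the conservation laws.
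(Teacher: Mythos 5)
Your proposal is correct and follows essentially the same route as the paper: integrate the inequality of Corollary~\ref{cor:intmorbil} in time with $\psi(x,y)=|x-y|$, bound $\sup_t|\dot I(t)|$ via \eqref{eq:intmor1} and the conservation laws, and retain the diagonal nonnegative contributions of $\Delta_x^2\psi=-c\,\delta_{x=y}$ and of $N_{(p,\psi)}$ to produce \eqref{eq:stima} and \eqref{eq:stima0}. Your added care about regularizing the singular, non-decaying weight $|x|$ is a legitimate refinement of a step the paper glosses over, but it does not change the argument.
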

\begin{proof}
  Integrating \eqref{eq:intmor2a}  to time variable one obtains by \eqref{eq:intmor1}
\begin{align}
  &
 2 \sum_{\mu,\kappa=1}^N \left[\int_{\R^3}\int_{\R^3}j_{u_\mu}(t,x)\cdot\nabla_x\psi(x,y)m_{u_\kappa}(t,y)\,dxdy\right]_{t=S}^{t=T}
  \label{eq:intmor3}
  \\
  &
  \geq
  -2 \sum_{\mu,\kappa=1}^N \int_{S}^Tm_{u_\mu}(t,x)m_{u_\kappa}(t,y)\Delta^2_x\psi(x,y)\,dxdydt
  \nonumber
  \\
    &
  \ \ \ + \frac{4p}{p+1}\sum_{\mu,\kappa=1}^N\beta_{\mu\mu}\int_{S}^T\int_{\R^3}\int_{\R^3}
  |u_\mu(t,x)|^{2p+2}m_{u_\kappa}(t,y)\Delta_x\psi(x,y)\,dxdydt
  \nonumber
  \\
  &
  \ \ \ +\frac{4p}{p+1}\sum_{\substack{\mu,\nu,\kappa=1\\
  \mu\neq\nu}}^N\beta_{\mu\nu}\int_{S}^T\int_{\R^3}\int_{\R^3}
  |u_\mu(t,x)|^{p+1}|u_\nu(t,x)|^{p+1}m_{u_\kappa}(t,y)\Delta_x\psi(x,y)\,dxdydt
  \nonumber
\end{align}
Now choose $\psi(x,y)=|x-y|$. For the l.h.s of the \eqref{eq:intmor3} we have the immediate bound
\begin{align}
&
 2 \sum_{\mu,\kappa=1}^N \left[\int_{\R^3}\int_{\R^3}j_{u_\mu}(t,x)\cdot\nabla_x\psi(x,y)m_{u_\kappa}(t,y)\,dxdy\right]_{t=S}^{t=T}
\label{eq:a}
\\
&
\leq C_1\left(\sum_{\mu=1}^N\|u_\mu(T)\|_{H^1_x}
+\sum_{\mu=1}^N\|u_\mu(S)\|_{H^1_x}\right)
\nonumber
\\
&
\leq C_2\sum_{\mu=1}^N\|u_{\mu,0}\|_{H^1_x}<\infty,
\nonumber
\end{align}
for some $C_1,C_2>0$ and any $T,S\in \R$, since the $H^1_x$-norm is preserved. 
We have
\begin{equation*}
\Delta_x|x-y|=\frac{n-1}{|x-y|},
\qquad
\Delta^2_x|x-y|=-4\pi\delta_{x=y}\leq0,
\end{equation*}
and hence
\begin{align}
&
  -2 \sum_{\mu,\kappa=1}^N \int_{S}^Tm_{u_\mu}(t,x)m_{u_\kappa}(t,y)\Delta^2_x\psi(x,y)\,dxdydt
  \label{eq:b}
 \\
    &
  \ \ \ + \frac{4p}{p+1} \Bigg(\sum_{\mu,\kappa=1}^N\beta_{\mu\mu}\int_{S}^T\int_{\R^3}\int_{\R^3}
  |u_\mu(t,x)|^{2p+2}m_{u_\kappa}(t,y)\Delta_x\psi(x,y)\,dxdydt
  \nonumber
  \\
  &
  \ \ \ +\sum_{\substack{\mu,\nu,\kappa=1\\
  \mu\neq\nu}}^N\beta_{\mu\nu}\int_{S}^T\int_{\R^3}\int_{\R^3}
  |u_\mu(t,x)|^{p+1}|u_\nu(t,x)|^{p+1}m_{u_\kappa}(t,y)\Delta_x\psi(x,y)\,dxdydt\Bigg)
  \nonumber
  \\
  &
  \geq
  C\sum_{\mu=1}^N\left(\int_{S}^T \int_{\R^3}|u_\mu(t,x)|^4 \,dt\,dx+
   \beta_{\mu\mu}\int_{S}^T\int_{\R^3}\int_{\R^3}\frac{ |u_\mu(t,x)|^{2p+2}|u_\mu(t,y)|^2}{|x-y|}
  \,dx\,dy\,dt\right),
  \nonumber
\end{align}
for some $C>0$, and any $T,S\in\R$.
The thesis follows by \eqref{eq:intmor3}, \eqref{eq:a}, and \eqref{eq:b}, letting $T\to\infty, S\to -\infty$.
\end{proof}

In addition we get also 

\begin{proposition}\label{dim12}
  Let $d=1,2$, $p>0$ as in \eqref{eq:base}, and let $(u_\mu)_{\mu=1}^N\in\mathcal C(\R,H^1(\R^n)^N)$ be a global solution to \eqref{eq:nls}. 
  Then we have,
  \begin{itemize}
 \item for $d=1$
 \begin{align} \label{eq:stima1}
  \sum_{\mu=1}^N \beta_{\mu\mu}\int_{\R}\int_{\R}|u_\mu(t,x)|^{2p+4}\,dt\,dx<\infty,
  \end{align}
   \item for $d=2$
    \begin{align} \label{eq:stima2}
  \sum_{\mu=1}^N \beta_{\mu\mu}  \int_{\R}\int_{\R^2}\int_{\R^2}\frac{ |u_\mu(t,x)|^{2p+2}|u_\mu(t,y)|^2}{|x-y|}\,dt\,dx\,dy
  <\infty.
    \end{align}
\end{itemize}
 \end{proposition}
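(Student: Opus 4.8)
The plan is to mimic the proof of Proposition~\ref{dim3}, with one structural change dictated by the dimension. For $\psi(x,y)=|x-y|$ the bilaplacian $\Delta^2_x\psi$ is no longer a nonpositive distribution when $d=1,2$: off the diagonal it equals $(d-1)(3-d)|x-y|^{-3}$, i.e.\ it vanishes for $d=1$ and is strictly positive for $d=2$, while the diagonal contribution (a multiple of $\delta''$ for $d=1$, a regularized singular term for $d=2$) is certainly not nonpositive either. Hence Corollary~\ref{cor:intmorbil} is of no use, and instead I would integrate the identity \eqref{eq:intmor2} of Lemma~\ref{lem:intmor} directly in time over an interval $[S,T]$, using \eqref{eq:intmor1} to rewrite the left-hand side as $\dot I(T)-\dot I(S)$. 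As usual the identities are proved first for smooth solutions and extended to $(u_\mu)_{\mu=1}^N\in\mathcal C(\R,H^1(\R^d)^N)$ by the standard density argument already built into Lemma~\ref{lem:intmor}; from now on $\psi(x,y)=|x-y|$, which is convex and radial.

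For the left-hand side, since $\nabla_x\psi=(x-y)/|x-y|$ has modulus $1$, one gets $|\dot I(t)|\le 2\sum_{\mu,\kappa}\|j_{u_\mu}(t)\|_{L^1_x}\|m_{u_\kappa}(t)\|_{L^1_x}\le 2\big(\sum_\mu\|u_\mu(t)\|_{L^2_x}\|\nabla u_\mu(t)\|_{L^2_x}\big)\big(\sum_\kappa\|u_\kappa(t)\|_{L^2_x}^2\big)$; by conservation of mass (for each component) and of the defocusing energy the norms $\sum_\mu\|u_\mu(t)\|_{H^1_x}$ are bounded uniformly in $t$, so $\sup_{t\in\R}|\dot I(t)|\le C=C(\|(u_{\mu,0})_{\mu=1}^N\|_{\Hin^1_x})<\infty$, exactly as in \eqref{eq:a}. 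For the first (``bilinear gradient'') term on the right-hand side of \eqref{eq:intmor2}, summing over $\mu,\kappa$ and writing $M:=\sum_{\mu=1}^N|u_\mu|^2\ge 0$ gives $2\sum_{j=1}^d\int_{\R^d}\int_{\R^d}\Delta_x\psi(x,y)\,\partial_{x_j}M(x)\,\partial_{y_j}M(y)\,dx\,dy$. When $d=1$, $\Delta_x|x-y|=2\delta_{x=y}$ and this collapses to $4\int_\R (M'(x))^2\,dx\ge 0$; when $d=2$, $\Delta_x|x-y|=|x-y|^{-1}$ and it equals $2\sum_{j=1}^2\int_{\R^2}\int_{\R^2}|x-y|^{-1}\partial_{x_j}M(x)\,\partial_{y_j}M(y)\,dx\,dy\ge 0$, the positivity being the genuinely new point: the kernel $|x|^{-1}$ is of positive type on $\R^2$, since $\widehat{|\cdot|^{-1}}=c|\xi|^{-1}\ge 0$ there (equivalently $|z|^{-1}=c\int_0^\infty t^{-1/2}e^{-t|z|^2}\,dt$ and Gaussians have nonnegative Fourier transform). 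In either case this term is nonnegative and may simply be discarded.

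It remains to handle $N_{(p,\psi)}$ in \eqref{eq:nonlin}. Every summand there is nonnegative because $\beta_{\mu\nu}\ge 0$ and $\Delta_x\psi\ge 0$ (as a measure); keeping only the diagonal indices $\mu=\nu=\kappa$, for which $\beta_{\mu\mu}\neq 0$, produces a lower bound for $N_{(p,\psi)}$. For $d=1$ the Dirac mass identifies the two integration variables and this lower bound is $\tfrac{8p}{p+1}\sum_{\mu=1}^N\beta_{\mu\mu}\int_\R|u_\mu(t,x)|^{2p+4}\,dx$; for $d=2$ it is $\tfrac{4p}{p+1}\sum_{\mu=1}^N\beta_{\mu\mu}\int_{\R^2}\int_{\R^2}\frac{|u_\mu(t,x)|^{2p+2}|u_\mu(t,y)|^2}{|x-y|}\,dx\,dy$. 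Combining the three pieces, $\int_S^T(\text{that lower bound})\,dt\le \dot I(T)-\dot I(S)\le 2C$ for all $S<T$; letting $T\to+\infty$, $S\to-\infty$ and using monotone convergence yields \eqref{eq:stima1} for $d=1$ and \eqref{eq:stima2} for $d=2$.

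The one step I expect to require real care, and the only place the restriction to $d\le 2$ enters this particular argument, is establishing that the bilinear gradient term is nonnegative, i.e.\ that $|x-y|^{-1}$ defines a positive-definite kernel on $\R^2$ (for $d=1$ the analogue is the trivial identity $\Delta_x|x-y|=2\delta_{x=y}$); everything else — the uniform bound on $\dot I$, the sign of the nonlinear summands, and the smooth-approximation reduction — is routine and parallels Proposition~\ref{dim3}.
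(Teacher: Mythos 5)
Your argument is correct and is essentially the paper's own proof: integrate \eqref{eq:intmor2} directly in time with $\psi(x,y)=|x-y|$, bound $\dot I$ by the conserved $\Hin^1_x$-norm, discard the nonnegative bilinear gradient term, and keep only the diagonal $\mu=\nu=\kappa$ part of $N_{(p,\psi)}$. Your justification of the nonnegativity of the gradient term via the Fourier positivity of the kernel $|x-y|^{-1}$ on $\R^2$ is exactly the content of the paper's identity \eqref{eq:stimafin2}, which rewrites that term as $\int_{\R}\|\sum_{\mu}(-\Delta)^{1/4}m_{u_\mu}\|_{L^2_x}^2\,dt$.
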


\begin{proof}

The cases $d=1,2$ can be treated by a direct application of the inequality \eqref{eq:intmor2}. Pick up once again $\psi(x,y)=|x-y|$, then we have
\begin{equation}\label{eq:delta}
\Delta_x\psi=
\begin{cases}
\frac 1{|x-y|}  \ \ \ \  \  \  \text{if} \ \ \ d=2,\\
2\delta_{x=y} \ \ \ (= D^2_x\psi)\  \,  \text{if} \ \ \ d=1.
\end{cases}
\end{equation}
Arguing as in
the proof of Proposition \ref{dim3}, we get
\begin{itemize}
 \item For $d=1$
 \begin{align} \label{eq:stimafin}
  \int_{\R}\int_{\R}|\sum_{\mu=1}^N\partial_xm_{u_\mu}(t,x)|^{2}\,dt\,dx
+ \sum_{\mu=1}^N \beta_{\mu\mu}\int_{\R}\int_{\R}|u_\mu(t,x)|^{2p+4}\,dt\,dx<\infty,
  \end{align}
  from which we infer the inequality \eqref{eq:stima1}.
   \item For $d=2$, first one needs to recall the property (fulfilled for any $d\geq1$)
\begin{align}\label{eq:stimafin2}
&\sum_{\mu,\kappa=1}^N\int_{\R}\int_{\R^d}\int_{\R^d}
  \Delta_x\psi(x,y) \nabla_xm_{u_\mu}(t,x)\cdot\nabla_y m_{u_\kappa}(t,y)\,dxdydt  \\
  &
  =\int_{\R}
   \Big\|\sum_{\mu=1}^N(-\Delta)^{\frac{1}4}m_{u_\mu}(t,x)\Big\|^2_{L^2_x}\,dt,
  \nonumber
\end{align}
(for more details see \cite{GiVel}). Then we get the following
\begin{align*}
   &\int_{\R}
   \Big\|\sum_{\mu=1}^N(-\Delta)^{\frac{1}4} |u_\mu(t,x)|^{2}\Big\|^2_{L^2_x}\,dt
 \\
   &+\sum_{\mu,\kappa=1}^N\beta_{\mu\mu}\int_{\R}\int_{\R^2}\int_{\R^2}\frac{ |u_\mu(t,x)|^{2p+2}|u_\kappa(t,y)|^2}{|x-y|}\,dt\,dx\,dy<\infty,
\end{align*}
that yields the inequality \eqref{eq:stima2}. \qedhere
\end{itemize}
\end{proof}
\begin{remark}
We observe also that, for $d=2,$ an application of the Sobolev embedding theorem implies, in similarity with the case $d=3,$ also the following bound
    \begin{align*}
  \sum^N_{\mu=1}\|u_\mu(t, x)\|^4_{L^4(\R, L_x^8)}<\infty.
  \end{align*}

\end{remark}

\begin{remark}
One could prove the interaction inequalities of the Proposition \ref{dim12} by following the theory developed for a single NLS in the
paper \cite{CGT} and based on a suitable choice of the function $\psi(x,y),$ built 
case by case. To be more precise: it is introducedfor $d=1$
\begin{equation}\label{eq:psi}
\psi(x,y)=
2\int_{-\infty}^{\frac{x-y}\varepsilon} e^{-t^2}\,dt \ \ \ \  \text{with} \ \ \varepsilon>0,
\end{equation}
and then integration by parts are performed in combination with the limiting argument $\varepsilon\rightarrow0;$
for $d=2$ it is selected a even function $\Delta^2_x\psi$ satisfying the property
\begin{align*}
-\Delta^2_x\psi=\frac {2\pi}{a}-h_a(|x-y|),
\end{align*}
for some real number $a>0$ and with
\begin{align*}
h_a(|x-y|)=
\begin{cases}
\frac 1{|x-y|^3}  \ \ \ \  \  \  \text{if} \ \ \ |x-y|\geq a,\\
\ \ 0\  \, \ \ \  \ \  \ \ \text{elesewhere},
\end{cases}
\end{align*}
then it is used a bilinear Morawetz inequality similar to \eqref{eq:intmor2}.
We elaborate our own method which is easier  to 
technicalities used above and  well-suited also to treat the case of
system with more than two nonlinear coupled equations.
\end{remark}

\section{Proof of Theorem \ref{thm:main}}\label{MainThm}


We split the proof of the main Theorem \ref{thm:main} in two steps. In the first one we 
shall show, by transposing the method of \cite{Vis}, some decaying properties of 
the solution of the system \eqref{eq:nls}. In the second one we present the proof of the
scattering by combining the argument of the first step with the theory 
estabilished in \cite{Ca} and \cite{GiVel}, here applied to the case of the system of equations. 
Along this section we 
use the following further notations: given any two positive real numbers $a, b,$ we write $a\lesssim b$ 
to indicate $a\leq C b,$ with $C>0,$ we unfold the constant only when needed.
Moreover we indicate $w(t,x)=(u_\mu(t,x))_{\mu=1}^N,$ we shall use both the notations frequently
 and without much discussion. We additionally notice that Theorem 3.3.9 and Remark 3.3.12 in \cite{Ca} in connection with the defocusing nature of the system imply a well-known result concerning global well-posedness 
for \eqref{eq:nls} (see also \cite{FaMo}):
\begin{proposition}\label{ConsLaw}
  Let $1\leq d \leq 3$ and $p\in\R$ such that \eqref{eq:base} holds.
  Then for all $(u_{\mu,0})_{\mu=1}^N \in \Hin^1_x$ there exists a unique 
$(u_\mu)_{\mu=1}^N \in C(\R,\Hin^1_x) $ 
  solution to \eqref{eq:nls}, moreover 
\begin{align}
  &\norma{u_\mu(t)}_{L^2_x}=\norma{u_\mu(0)}_{L^2_x} \quad \text{ for all }\mu=1,\dots,N, \label{eq:massconservation}\\
  &E(u_1(t),\dots,u_N(t))=E(u_1(0),\dots,u_N(0))\label{eq:energyconservation},
\end{align}
with \begin{equation*}
  E(u_{1},\dots,u_{N})=\int_{\R^d} \sum_{\mu=1}^N \abs{\nabla{u_{\mu}}}^2 
  +\sum_{\mu,\nu=1 }^N
  \beta_{\mu\nu}\frac{|u_\mu u_\nu|^{p+1}}{p+1} 
         \,dx.
\end{equation*}
\end{proposition}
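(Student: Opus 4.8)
The plan is to derive Proposition~\ref{ConsLaw} from the abstract local well-posedness and conservation-law machinery of \cite{Ca}, checking that the coupled nonlinearity in \eqref{eq:nls} fits the hypotheses there. First I would observe that, setting $w=(u_\mu)_{\mu=1}^N$, the system \eqref{eq:nls} is a Hamiltonian Schr\"odinger equation $i\partial_t w + \Delta w = g(w)$ on the product space $\Hin^1_x$, with nonlinearity $g(w)=\big(\sum_{\nu=1}^N \beta_{\mu\nu}|u_\nu|^{p+1}|u_\mu|^{p-1}u_\mu\big)_{\mu=1}^N$ and potential $G(w)=\sum_{\mu,\nu=1}^N\beta_{\mu\nu}\frac{|u_\mu u_\nu|^{p+1}}{p+1}$, so that $g=\nabla G$ (the gauge-invariance structure $g(e^{i\theta}w)=e^{i\theta}g(w)$ componentwise guarantees this). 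I would check the growth bounds: each term $|u_\nu|^{p+1}|u_\mu|^{p-1}u_\mu$ is controlled, via H\"older, by $\|u_\nu\|^{p+1}\|u_\mu\|^p$ in the relevant Lebesgue norms, which under \eqref{eq:base} (i.e. $p<\infty$ for $d=1,2$ and $p<2$ for $d=3$) keeps the total homogeneity $2p+1$ strictly below the $H^1$-critical exponent $1+4/(d-2)$ for $d=3$ and arbitrary for $d=1,2$; the condition $p\geq 1$ (cf.\ Remark~\ref{beta0}) is precisely what is needed to differentiate the cross term $|u_\nu|^{p+1}|u_\mu|^{p-1}u_\mu$ and obtain the local Lipschitz bound on $g$ in $\Hin^1_x$ that Theorem~3.3.9 of \cite{Ca} requires. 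This yields local existence, uniqueness, and continuous dependence in $C((-T_{\max},T_{\max}),\Hin^1_x)$, together with the blow-up alternative.

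Next I would record the two conservation laws. Mass conservation \eqref{eq:massconservation} for each component follows because multiplying the $\mu$-th equation by $\overline{u_\mu}$, taking imaginary parts and integrating kills both $\Delta u_\mu$ and the nonlinear term (which is $|u_\mu|$ times a real factor times $u_\mu$, hence contributes a real quantity), so $\frac{d}{dt}\|u_\mu(t)\|_{L^2_x}^2=0$; alternatively this is the componentwise $L^2$ conservation from the phase invariance $u_\mu\mapsto e^{i\theta_\mu}u_\mu$ of each equation. Energy conservation \eqref{eq:energyconservation} is the Hamiltonian structure: $E(w)=\int \sum_\mu|\nabla u_\mu|^2 + \sum_{\mu,\nu}\beta_{\mu\nu}\frac{|u_\mu u_\nu|^{p+1}}{p+1}\,dx$ is exactly $\|\nabla w\|_{\Lin^2_x}^2 + \int G(w)$, and $\frac{d}{dt}E(w(t))=0$ follows from pairing the equation with $\partial_t w$ (or by the abstract conservation result, Remark~3.3.12 in \cite{Ca}), with the standard caveat that this identity is first proved for smooth solutions and then extended by the density/approximation argument already invoked in the proof of Lemma~\ref{lem:mor}.

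Finally I would upgrade local to global well-posedness using the defocusing sign $\beta_{\mu\nu}\geq 0$. Since every term in the potential part of $E$ is nonnegative, $\|\nabla w(t)\|_{\Lin^2_x}^2\leq E(w(t))=E(w_0)$, and combined with the conserved $L^2$ norms this gives a uniform-in-time a priori bound $\sup_t\|w(t)\|_{\Hin^1_x}\leq C(w_0)$; the blow-up alternative then forces $T_{\max}=\infty$ in both time directions, so $w\in C(\R,\Hin^1_x)$. I do not anticipate a genuine obstacle here — the content is entirely a matter of verifying that the weakly coupled nonlinearity meets the functional-analytic hypotheses of the cited abstract theorems — but the one point deserving care is the differentiability/Lipschitz estimate for the coupling term $|u_\nu|^{p+1}|u_\mu|^{p-1}u_\mu$ in $\Hin^1_x$, which is exactly where the restriction $p\geq 1$ (and hence the exclusion of $d\geq 4$, as noted in Remark~\ref{beta0}) enters, since for $p<1$ the factor $|u_\mu|^{p-1}$ is singular and the $H^1$-Lipschitz bound fails.
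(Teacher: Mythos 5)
Your proposal is correct and follows essentially the same route as the paper, which simply invokes Theorem 3.3.9 and Remark 3.3.12 of \cite{Ca} together with the defocusing sign of the nonlinearity; you have in fact supplied more detail (the verification of the gauge-invariance/growth hypotheses, the role of $p\geq 1$ for the cross terms, and the coercivity argument for globalization) than the paper itself records.
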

 \begin{remark}
   The conservation laws \eqref{eq:massconservation} and \eqref{eq:energyconservation} for the solution to \eqref{eq:nls}
   yield also that 
   \begin{equation}\label{eq:kinenergyconservation}
     \sum_{\mu=1}^N \norma{u_\mu(t)}_{ H^1_x} \leq \sum_{\mu=1}^N \norma{u_\mu(0)}_{H^1_x} < \infty.
   \end{equation}
 \end{remark}

 \subsection{Decay of solutions to \eqref{eq:nls}}\label{decsol}
 Our main purpose in this section is to show some decaying properties of the solution to  \eqref{eq:nls} which is a key property
 in the proof of the scattering. To be specific  one has the following.
\begin{proposition}\label{decay}
Let $1\leq d \leq 3$ and  $p\in \R$ such that \eqref{eq:base} holds. If $w\in \mathcal C(\R,\Hin^1_x),$ 
is a global solution to \eqref{eq:nls}, then we have the decay property
\begin{align}\label{eq:decay1}
\lim_{t\rightarrow \pm \infty} \|w(t)\|_{\Lin^q_x}=0,
\end{align}
with $2<q<6,$ for $d= 3$ and with $2<q<+\infty,$ for $d=1,2.$ In addition, if $d=1$
one gets
\begin{align}\label{eq:decay2}
\lim_{t\rightarrow \pm \infty} \|w(t)\|_{\Lin^\infty_x}=0.
\end{align} 
\end{proposition}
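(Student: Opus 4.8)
The plan is to combine the global space–time bounds of Propositions~\ref{dim3} and \ref{dim12} with a mass–concentration/contradiction argument, in the spirit of \cite{Vis}. It suffices to show, for each fixed $\mu\in\{1,\dots,N\}$ and each $q$ in the stated range, that $\|u_\mu(t)\|_{L^q_x}\to0$ as $t\to+\infty$ (the case $t\to-\infty$ is symmetric, and $\|w(t)\|_{\Lin^q_x}$ is comparable to $\sum_\mu\|u_\mu(t)\|_{L^q_x}$). Suppose not: then there are $\delta>0$ and $t_n\to+\infty$ with $\|u_\mu(t_n)\|_{L^q_x}\ge\delta$. By \eqref{eq:kinenergyconservation} one has $\sup_t\|u_\mu(t)\|_{H^1_x}\le C_0<\infty$, and since $q$ lies strictly below the Sobolev exponent of $\R^d$ ($q<6$ if $d=3$, $q<+\infty$ if $d=1,2$), a ``bubbling'' lemma yields concentration: tiling $\R^d$ by unit cubes $\{Q_j\}$, the elementary bound $\sum_j\|f\|_{L^q(Q_j)}^q\le\bigl(\sup_j\|f\|_{L^q(Q_j)}\bigr)^{q-2}\sum_j\|f\|_{L^q(Q_j)}^2$, the Sobolev embedding $H^1(Q_j)\hookrightarrow L^q(Q_j)$, the identity $\sum_j\|f\|_{H^1(Q_j)}^2=\|f\|_{H^1(\R^d)}^2$, and an interpolation on $Q_{j_0}$ between $L^2$ and $L^r$ for some $q<r$ still below the Sobolev exponent, together produce points $x_n\in\R^d$ and $\eta=\eta(\delta,C_0,d,q)>0$ with
\[
  \int_{B(x_n,\sqrt d)}|u_\mu(t_n,x)|^2\,dx\ \ge\ \eta\qquad\text{for all }n.
\]

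Next I would promote this to concentration on a time interval of fixed length. Let $\chi$ be a smooth cutoff with $\chi\equiv1$ on $B(0,\sqrt d)$, $\supp\chi\subset B(0,2\sqrt d)$, $\|\nabla\chi\|_{L^\infty}\lesssim1$. Using \eqref{eq:nls} and integrating by parts, the nonlinear terms are purely imaginary and cancel, leaving
\[
  \frac{d}{dt}\int_{\R^d}\chi(x-x_n)\,|u_\mu(t,x)|^2\,dx
  \ =\ 2\,\Im\!\int_{\R^d}\overline{u_\mu}\,\nabla\chi(x-x_n)\cdot\nabla u_\mu\,dx,
\]
so that $\bigl|\tfrac{d}{dt}\int\chi(\cdot-x_n)|u_\mu(t)|^2\bigr|\lesssim\|u_\mu(t)\|_{L^2_x}\|\nabla u_\mu(t)\|_{L^2_x}\le K$, uniformly in $t$ and in $n$, by \eqref{eq:massconservation} and \eqref{eq:kinenergyconservation}. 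Hence there is $\tau=\tau(\eta,K)>0$ with
\[
  \int_{B(x_n,2\sqrt d)}|u_\mu(t,x)|^2\,dx\ \ge\ \tfrac{\eta}{2}\qquad\text{for all }t\in I_n:=[t_n,t_n+\tau],
\]
and, passing to a subsequence, we may assume the intervals $I_n$ are pairwise disjoint.

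Finally I would contradict the Morawetz bounds. On $B(x_n,2\sqrt d)$, Hölder turns the last inequality into $\int_{B(x_n,2\sqrt d)}|u_\mu(t,x)|^r\,dx\ge c(r,d,\eta)>0$ for every $r>2$, uniformly in $n$ and $t\in I_n$. With $r=4$, $d=3$, this gives $\int_{I_n}\!\int_{\R^3}|u_\mu(t,x)|^4\,dx\,dt\ge c\tau>0$, whose sum over $n$ contradicts \eqref{eq:stima}. With $r=2p+4$, $d=1$, it gives $\int_{I_n}\!\int_{\R}|u_\mu(t,x)|^{2p+4}\,dx\,dt\ge c\tau>0$, contradicting \eqref{eq:stima1} (here $\beta_{\mu\mu}\ne0$). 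For $d=2$, restricting both $x$ and $y$ to $B(x_n,2\sqrt d)$ — where $|x-y|\le4\sqrt d$ — and using $r=2p+2$ in the $x$–variable gives $\int_{I_n}\!\int\!\int\frac{|u_\mu(t,x)|^{2p+2}|u_\mu(t,y)|^2}{|x-y|}\,dx\,dy\,dt\ge c\tau>0$, contradicting \eqref{eq:stima2}. This proves \eqref{eq:decay1}; for \eqref{eq:decay2}, once \eqref{eq:decay1} holds with $q=4$, the one–dimensional Gagliardo–Nirenberg inequality $\|u_\mu(t)\|_{L^\infty_x}\lesssim\|u_\mu(t)\|_{L^4_x}^{2/3}\|\partial_x u_\mu(t)\|_{L^2_x}^{1/3}$ and \eqref{eq:kinenergyconservation} conclude. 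The main obstacle — and the reason the argument is run through the bilinear Morawetz quantities rather than through boundedness of $\tfrac{d}{dt}\|u_\mu(t)\|_{L^q_x}^q$, which does not close when $d=3$ — is securing uniformity of the constants $\eta,K,\tau,c$ in the location $x_n$, which is exactly what makes the sum over the disjoint intervals $I_n$ diverge.
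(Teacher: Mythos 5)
Your proposal is correct and follows essentially the same route as the paper's proof: a contradiction argument combining the localized Gagliardo--Nirenberg inequality to locate concentration cubes $Q_{x_n}$, the almost-conservation of localized mass (via \eqref{eq:mor1} with a cutoff) to propagate the concentration over time intervals of fixed length, and divergence of the resulting sum against the global interaction Morawetz bounds. The only cosmetic differences are that you contradict \eqref{eq:stima} rather than \eqref{eq:stima0} in the case $d=3$ and run the concentration step for each subcritical $q$ directly instead of fixing $q=\tfrac{10}{3}$ and interpolating, neither of which changes the substance of the argument.
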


\begin{proof}
We treat only the case $t\rightarrow \infty$,
the case $t\rightarrow -\infty$ being analogous; 
we split the proof in two part: we deal first with $d=3,$ and then $d=1,2.$ \\
{\bf Case $d= 3.$} 
 Following the approach of \cite{GiVel2}
it is sufficient to prove that the property \eqref{eq:decay1} 
for a suitable $2<q<6$, since the thesis for the general case 
can be then obtained by the conservation of mass \eqref{eq:massconservation}, the kinetic energy \eqref{eq:kinenergyconservation}  
and interpolation. In order to do this we shall prove that
\begin{equation}\label{eq:potenergy2}
\lim_{t\rightarrow \pm \infty} \|w(t)\|_{\Lin^{\frac{10}{3}}_x}=0.
\end{equation}
For this aim we argue as in \cite{Vis}
and we assume by the absurd that there exists
$\{t_n\}$ such that
\begin{equation}\label{eq:sequencetime}
\lim_{n\rightarrow \infty} t_n=\infty \ \ \hbox{ and } \ \  \inf_n \|w(t_n, x)\|_{\Lin^{\frac{10}{3}}_x}=\epsilon_0>0.
\end{equation}
Next recall the following localized Gagliardo-Nirenberg inequality given in the Appendix \ref{appendix}
(see also \cite{L1} and \cite{L2}):
\begin{equation}\label{eq:GNloc}
\|\varphi\|_{\Lin^{\frac{2d+4}{d}}_x}^{\frac{2d+4}{d}}\leq C \left(\sup_{x\in \R^3} \|\varphi\|_{\Lin^2(Q_x)}\right)^{\frac{4}{d}} 
\|\varphi\|^2_{\Hin^1_x},
\end{equation}
where $Q_x$ is the unit cube in $\R^3$ centered in $x$.
By combining \eqref{eq:sequencetime}, \eqref{eq:GNloc} (where we choose $\varphi=w(t_n, x)$) with the bound $\|w(t_n, x)\|_{\Hin^1_x}<+\infty$,
we deduce that
\begin{equation}\label{eq:sequencespace}
\exists x_n \in \R^d \ \ \hbox{ such that } \ \ \|w(t_n, x)\|_{\Lin^2(Q_{x_n})}=\delta_0>0.
\end{equation}
We claim that
\begin{equation}\label{eq:claim0}
\exists \bar t>0 \ \ \hbox{ such that } \ \  \|w(t, x)\|_{\Lin^2(\tilde Q_{x_n})}\geq \delta_0/2, \ \ \forall t\in (t_n, t_n+\bar t),
\end{equation}
where $\tilde Q_x$ denotes the cube in $\R^d$ of radius $2$ centered in $x$.
In order to prove \eqref{eq:claim0}
we fix a cut--off function $\chi(x)\in C^\infty_0(\R^d)$ such that
$\chi(x)=1$ for $|x|<1$ and $\chi(x)=0$ for $|x|>2$.
Then by using \eqref{eq:mor1} where we choose
$\phi(x)=\chi (x-x_n)$ we get
$$\left|\frac d{dt} \int_{\R^d} \chi(x -x_n) |w(t, x)|^2 dx\right|< C \sup_t \|w(t,x)\|_{\Hin^1_x}^2.$$
Hence by \eqref{eq:kinenergyconservation} and the fundamental theorem of calculus 
we deduce

\begin{equation}
\left|\int_{\R^d} \chi(x -x_n) |w(s, x)|^2 dx - \int_{\R^d} \chi(x -x_n) |w(t, x)|^2 dx\right|\leq C_0 |t-s|,
\end{equation}
for some $C_0>0$ independent of $n$. Hence if we choose $t=t_n$ we get
the elementary inequality 

\begin{equation}
\int_{\R^d} \chi(x -x_n) |w(s, x)|^2 dx\geq  \int_{\R^d} \chi(x -x_n) |w(t_n, x)|^2 dx - C_0|t_n-s|,
\end{equation}
which implies (by the compact support property of the function $\chi$) 

\begin{equation}
\int_{\tilde Q_{x_n}} |w(s, x)|^2 dx\geq  \int_{Q_{x_n}} |w(t_n, x)|^2 dx - C_0|t_n-s|,
\end{equation}

Hence \eqref{eq:claim0} follows provided that we choose $\bar t>0$
such that
$\delta_0^2 - C_0 \bar t>\delta_0^2/4$.
The estimate \eqref{eq:claim0} contradicts the Morawetz estimates \eqref{eq:stima0}.
In fact, the lower bounds \eqref{eq:claim0} means that
\begin{equation}\label{eq:claim1}
 \sum_{\mu=1}^N \|u_\mu(t)\|^{2}_{L^{2}_x(\tilde Q_{x_n})} \geq C(d)\delta^{2}_0>0,
\end{equation}
for any $t\in (t_n, t_n+\bar t)$ with $\bar t$ as above, where we selected the intervals $t\in (t_n, t_n+\bar t)$ disjoint, 
and whence, by H\"older inequality, there exists $\bar\mu \in \{1,\dots,N\}$ such that 
\begin{equation}\label{eq:lowb1}
 \|u_{\bar\mu}(t)\|^{\bar p}_{L^{\bar p}_x(\tilde Q_{x_n})}\geq C(d)\frac {\delta^{2}_0}N, 
\end{equation}
for any $\bar p\geq 2$ and with $t\in (t_n, t_n+\bar t)$ and $\bar t$ as above. Thus we can write the following 
\begin{equation} \label{eq:stima3}
\begin{split}
  \min_{\mu=1,\dots,N} \beta_{\mu\mu}& \sum_{\mu=1}^N \int_{\R}\int_{\R^3}\int_{\R^3}\frac{ |u_\mu(t,x)|^{2p+2}|u_\mu(t,y)|^2}{|x-y|}
  \,dx\,dy\,dt \\
   &\geq  C \sum_{\mu=1}^N \sum_{n} \int_{t_n}^{t_n+\bar t}\int_{\R^d}\int_{\tilde Q_{x_n}\times\tilde Q_{x_n}} 
   |u_\mu(t,x)|^{2p+2}|u_\mu(t,y)|^2\,dx\,dy\,dt
	\\
 &\geq C   \sum_{n} \int_{t_n}^{t_n+\bar t} \delta^{4}_0\,dt=\infty,
\end{split}
\end{equation}
where in the last inequality  we used \eqref{eq:claim0} in combination with \eqref{eq:claim1}, \eqref{eq:lowb1}
and Fubini's Theorem. This leads to the contradiction with  \eqref{eq:stima0}.\\

{\bf Case $d= 1,2.$} We can argue as in the previous case just replacing the inequality \eqref{eq:GNloc}
by the following version 
\begin{equation}\label{eq:GNloc1}
\|\varphi\|_{\Lin^{3}_x}^{3}\leq C \left(\sup_{x\in \R^d} \|\varphi\|_{\Lin^2(Q_x)}\right) 
\|\varphi\|^2_{\Hin^1_x},
\end{equation}
(or alternatively by the \eqref{GNloc}) displayed in the Appendix \ref{appendix}, with the function $\varphi$ defined as above. Then  proceeding as in the previous step
we achieve, for $d=2,$ exactly the same chain of inequalities as in \eqref{eq:stima3} which is
in contradiction with \eqref{eq:stima2}. For $d=1$ we instead arrive at 
\begin{equation} \label{eq:stima4}
  \begin{split}
  \min_{\mu=1,\dots,N} &\,\beta_{\mu\mu} \sum_{\mu=1}^N  \int_{\R}\int_{\R}|u_\mu(t,x)|^{2p+4}\,dt\,dx \\
  &\geq C  \sum_{\mu=1}^N \sum_{n} \int_{t_n}^{t_n+\bar t}\int_{\tilde Q_{x_n}} |u_\mu(t,x)|^{2p+4}\,dt\,dx
    =\infty,
  \end{split}
\end{equation}
 but this contradicts the interaction estimate \eqref{eq:stima1}.
\end{proof}

\begin{remark}\label{decaylim}
As stated in the Introduction, we need to have the more stringent lower bound $\max(1, \frac 2d)$ with respect to similar one earned in Theorem 0.1 in \cite{Vis}. Indeed, if we select $0<p<1$ the coupling terms 
$\sum_{\substack{\mu,\nu=1}}^N \beta_{\mu\nu}|u_\nu|^{p+1}|u_\mu|^{p-1}u_\mu$ with $\mu\neq\nu$ give rise to a kind of nonlinearity 
which could forbid the local well-posedness result for the associated Cauchy problem \eqref{eq:nls} such as in Proposition \ref{ConsLaw}. If one replaces the nonlinear term in \eqref{eq:nls} with another model satisfying the assumptions  given in the Remark 3.3.12 in \cite{Ca}, then by repeating the argument of this section it should be possible to eliminate the lower bound conditions given in \eqref{eq:base} and \eqref{eq:al}. But as of now we are unaware of such references.
\end{remark}

 \subsection{Scattering for the NLS system \eqref{eq:nls}.}\label{NLSscat}
 This section is devoted to prove  Theorem \ref{thm:main}. 
The results are quite classic (see \cite{Ca}, \cite{GiVel2} and references therein), 
anyway we present them in the more general form of system framework. We recall from \cite{KT} the following. 

\begin{definition}\label{Sadm}
An exponent pair $(q, r)$ is Schr\"odinger-admissible if $2\leq q,r\leq \infty,$  $(q, r, n ) \neq (2,\infty, 2),$ and
\begin{align}\label{StrTV}
\frac 2q +\frac n{r}=\frac n2.
\end{align}
\end{definition} 

 In order to prove Theorem \ref{thm:main} we need the following lemma.
 \begin{lemma}\label{StriSys}
 Assume $p$ is as in \eqref{eq:base}, \eqref{eq:al}.
 Then, for any $w\in\mathcal C(\R,\Hin^1_x)$ global solution to \eqref{eq:nls}, we have
 \begin{align}
 w\in L^q(\R, \Win^{1,r}_x),
 \end{align}
for every Schr\"odinger-admissible pair $(q,r)$.
\end{lemma}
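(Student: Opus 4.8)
The plan is to deduce global Strichartz bounds for $w$ from the decay property \eqref{eq:decay1} together with the conservation laws of Proposition \ref{ConsLaw}, following the classical argument of \cite{GiVel2} and \cite{Ca} adapted to the system. The starting point is that, by \eqref{eq:kinenergyconservation}, $w$ is bounded in $L^\infty(\R,\Hin^1_x)$, so by Sobolev embedding it is bounded in $L^\infty(\R,\Lin^r_x)$ for every $2\le r\le p^*(d)$-type exponent. Combining this with the decay \eqref{eq:decay1} and interpolation, for a suitable exponent $\rho$ appearing in the nonlinearity one has $\|w(t)\|_{\Lin^\rho_x}\to 0$ as $t\to\pm\infty$. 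The nonlinear term $F_\mu(w)=\sum_\nu \beta_{\mu\nu}|u_\nu|^{p+1}|u_\mu|^{p-1}u_\mu$ has degree $2p+1$, and when applying the Duhamel formula $u_\mu(t)=e^{i(t-t_0)\Delta}u_\mu(t_0)-i\int_{t_0}^t e^{i(t-s)\Delta}F_\mu(w(s))\,ds$ on a time interval $J$ where $\|w\|_{L^\infty(J,\Lin^\rho_x)}$ is small, one can absorb $2p$ of the $2p+1$ factors into this small norm and keep one factor in a Strichartz norm, closing a contraction/bootstrap estimate on $J$.

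Concretely, I would: (i) fix a Schr\"odinger-admissible pair $(q,r)$ and the "nonlinear" admissible pair governed by $p$ (the pair $(\gamma,\rho)$ with $\rho$ chosen so that $F_\mu(w)\in L^{q'}(J,\Win^{1,r'}_x)$ whenever $w\in L^q(J,\Win^{1,r}_x)\cap L^\infty(J,\Lin^\rho_x)$), using the standard inhomogeneous Strichartz estimate and the fractional Leibniz/chain rule to handle the $W^{1,r'}$-norm of the nonlinearity — this uses $p\ge 1$ so that the nonlinearity is $C^1$, and $p<p^*(d)$ so that the relevant exponents are admissible; (ii) use \eqref{eq:decay1} to pick a large $T_0$ and partition $[T_0,\infty)$ into finitely many (or countably many with summable contributions) intervals $J_k$ on each of which $\|w\|_{L^\infty(J_k,\Lin^\rho_x)}\le\eta$ with $\eta$ small enough that the nonlinear Strichartz term is controlled by, say, $\tfrac12\|w\|_{L^q(J_k,\Win^{1,r}_x)}$ plus the free data contribution; (iii) run the bootstrap on each $J_k$ to get $\|w\|_{L^q(J_k,\Win^{1,r}_x)}\lesssim \|w(t_k)\|_{\Hin^1_x}\lesssim$ const, uniformly in $k$, and then sum the $\ell^q$ contributions over $k$ to conclude $w\in L^q([T_0,\infty),\Win^{1,r}_x)$; (iv) handle the compact remaining interval $[-T_0,T_0]$ trivially from $w\in \mathcal C(\R,\Hin^1_x)$ and local Strichartz theory, and the interval $(-\infty,-T_0]$ symmetrically; (v) finally upgrade from the single pair $(q,r)$ to all admissible pairs by feeding the obtained bound back into the Duhamel formula once more.

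The main obstacle I expect is the bookkeeping of exponents: one must check that for the full range of $p$ allowed by \eqref{eq:base} and \eqref{eq:al}, and for \emph{every} dimension $d\in\{1,2,3\}$, there is an admissible choice of $(q,r)$ and an intermediate Lebesgue exponent $\rho\in(2,p^*(d))$ (or $\rho<\infty$ when $d=1,2$) with $\rho$ in the range where decay \eqref{eq:decay1} holds, such that the H\"older arithmetic $\tfrac1{q'}=\tfrac1q+\tfrac{2p}{\gamma}$ and $\tfrac1{r'}=\tfrac1r+\tfrac{2p}{\rho}$ (schematically) closes with the gradient distributed correctly; the endpoint cases and the borderline $p=1$ or $p$ near $p^*(d)$ are where this is delicate, and for $d=1$ one also wants to exploit \eqref{eq:decay2}. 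A secondary technical point is the fractional Leibniz rule for the coupled nonlinearity $|u_\nu|^{p+1}|u_\mu|^{p-1}u_\mu$ when $p-1$ may be less than $1$, which is exactly why $p\ge 1$ is imposed; since the needed product and chain rule estimates are standard (and available in \cite{Ca}), I would cite them rather than reprove them.
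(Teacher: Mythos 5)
Your proposal follows essentially the same route as the paper's proof: Duhamel's formula on $[T,\infty)$ for $T$ large, inhomogeneous Strichartz estimates with the pair $(q,r)=\bigl(\tfrac{4(p+1)}{dp},\,2p+2\bigr)$, absorption of $2p$ of the $2p+1$ factors of the nonlinearity into $\sup_{\tau>T}\|w(\tau)\|_{\Lin^{2p+2}_x}$ (which tends to zero by Proposition \ref{decay}, the condition $p>2/d$ being exactly what makes the exponent of this decaying factor positive), and a bootstrap via Lemma 7.7.4 of \cite{Ca} to close the estimate uniformly in $t$. The one deviation is your partition of $[T_0,\infty)$ into intervals $J_k$: since \eqref{eq:decay1} gives smallness of $\|w(t)\|_{\Lin^{2p+2}_x}$ uniformly for \emph{all} $t\ge T_0$, a single unbounded interval suffices (and is what the paper uses), whereas your alternative of countably many $J_k$ each carrying a uniform $O(1)$ bound would not yield a convergent sum of $\ell^q$ contributions, so you should drop the partition and run the continuity argument directly on $[T_0,t]$, letting $t\to\infty$.
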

\begin{proof}
The proof is a transposition of the Theorem $7.7.3$, in \cite{Ca}. 
Let us consider the integral operator associated to \eqref{eq:nls} 
\begin{align}\label{eq:opint}
w(t+T)=e^ {it \Delta_{x}}w_0 +  \int_{0}^{t} e^ {i(t-\tau) \Delta_{x}} g(u(T+\tau),v(T+\tau),p) d\tau
\end{align}
where $t>T>0$ and 
\begin{gather*}
w(t)=\begin{pmatrix}
      u_1(t)\\
      \vdots\\
      u_N(t)
      \end{pmatrix}, \quad
w_{0}= \begin{pmatrix}
     u_{1,0}\\
      \vdots\\
     u_{N,0}
     \end{pmatrix}, \\
   g(w,p)=
\begin{pmatrix}  g_1(u_1,\dotsc,u_N,p)\\
\vdots
\\ g_N(u_1,\dotsc,u_N, p) \end{pmatrix} =  \begin{pmatrix}  \sum^N_{\nu=1}\beta_{1\nu} |u_1|^{p+1}|u_1|^{p-1}u_1\\
\vdots
\\ \sum^{N}_{\nu=1}\beta_{N\nu} |v_\nu|^{p+1}|u_1|^{p-1}u_1 \end{pmatrix} .
\end{gather*}
The thesis is obtained by making an use of the  classical inhomogeneous Strichartz estimates (see once again \cite{KT}).
We point out the details in handling the nonlinear part in \eqref{eq:opint},
 that is the estimate of the following
\begin{equation}\label{eq:nonlter}
\sum^N_{\mu=1}\|g_\mu(u_1,\dotsc,u_N,p)\|_{L^{q'}((T,t), W^{1,r'}_x)},
\end{equation}
for an appropriate $(q,r)$ Schr\"odinger-admissible couple:
we select $( q, r)$ such that 
\begin{equation}\label{eq:pair}
 ( q, r):= \left(\frac{4(p+1)}{np},2p+2\right).
\end{equation}

We consider for $\mu$ fixed the term 
$$g_\mu(u_1,\dotsc,u_N,p)= \sum^N_{\nu=1}\beta_{\mu\nu} |u_\nu|^{p+1}|u_\mu|^{p-1}u_\mu,$$
since the others can be handled in a similar way.
The H\"older inequality combined with Leibniz fractional rule gives
\begin{align}\label{eq.1nonl}
 \|g_\mu(u_1,\dotsc,u_N, p)\|_{L^{q'}((T,t), W^{1,r'}_x)}
 &
 \\
\leq C\big\Vert \sum^N_{\nu=1}\beta_{\mu\nu}\|u_\mu\|_{W^{1,r}_x}\|u_\nu|^{p+1}|u_\mu|^{p-1}\|_{L_x^{\frac{2p+2}{2p}}}\big\Vert_{L^{q'}((T,t))}&
\nonumber
\\
\leq C\max_{\mu,\nu=1,\dotsc,N}\beta_{\mu\nu}\big\Vert \|u_\mu\|_{W^{1,r}_x}\sum^N_{\nu=1}\|u_\nu|^{p+1}|u_\mu|^{p-1}\|_{L_x^{\frac{2p+2}{2p}}}\big\Vert_{L^{q'}((T,t))}.&
\nonumber
\end{align}
From the following pointwise Young inequality (see for instance \cite{HLP})
\begin{align*}
|u_\nu|^{p+1}|u_\mu|^{p-1}+|u_\mu|^{p+1}|u_\nu|^{p-1}\leq C(p) \left(|u_\mu|^{2p}+|u_\nu|^{2p}\right),
 \end{align*}
 we see that the last term of the inequality above is not greater than (we set here  $\beta=\max_{\mu,\nu=1,\dotsc,N}\beta_{\mu\nu}$)
 \begin{align}\label{eq.1non2}
   &\widetilde C(p, \beta)\big\|\|u\|_{W^{1,r}_x}\sum_{\nu=1}^N\|u_\nu\|_{L_x^{2p+2}}^{2p}\big\|_{L^q((T,t))}
   \\
   &\lesssim \Big\|
   \sum_{\mu=1}^N\|u_\mu\|_{W^{1,r}_x} \cdot \sum_{\nu=1}^N\|u_\nu\|_{L_x^{2p+2}}^{2p}  
   \Big \|_{L^{q'}((T,t))}
   \nonumber
   \\
   &\lesssim\Big\|
   \sum_{\mu=1}^N\|u_\mu\|_{W^{1,r}_x}   \cdot
   \Big(\sum_{\nu=1}^N\|u_\nu\|_{L_x^{2p+2}}^{2p+1-\frac{q}{q'}}\|u_\nu\|_{L_x^{2p+2}}^{\frac{q}{q'}-1}\Big)\|_{L^{q'}((T,t))}
   \nonumber
   \\
   &\lesssim \Big\| 
   \sum_{\mu=1}^N\|u_\mu\|_{W^{1,r}_x} \cdot 
   \Big(\sum_{\nu=1}^N\|u_\nu\|_{L_x^{2p+2}}\Big)^{\frac{q}{q'}-1}
   \cdot \sum_{\kappa=1}^N\|u_\kappa\|_{L_x^{2p+2}}^{2p+1-\frac{q}{q'}}  \Big\|_{L^{q'}((T,t))},
\nonumber
\end{align}
with all the constants involved in the inequalities above independent from $t,T.$ 
Notice that $(2p+1)q'-q>0$ so the last term of the above 
chain of inequalities can be bounded by
 \begin{align}\label{eq.1non3}
   \widetilde C(p, \beta) \,
   \Big\|
   \Big(\sum_{\kappa=1}^N\|u_\kappa\|_{W^{1,r}_x}\Big)^{\frac{q}{q'}}
   \Big(\sum_{\nu=1}^N\|u_\nu\|_{L_x^{2p+2}}\Big)^{2p+1-\frac{q}{q'}}
   \Big\|_{L^{q'}((T,t))},
\end{align}
here we used without any distinction the dummy indices $\mu,\nu$ and $\kappa$ because defined on the same set. Summing in $\mu$ the \eqref{eq.1non3} above  we get that the quantity in \eqref{eq:nonlter}
is bounded by
\begin{align}\label{eq.1non3a}
  C\sup_{\tau>T}
  \Big(\sum_{\nu=1}^N\|u_\nu(\tau)\|_{L_x^{2p+2}}\Big)^{2p+1-\frac{q}{q'}}
  \Big(\sum_{\kappa=1}^N\|u_\kappa\|_{L^{q}((T,t)),W^{1,r}_x)}\Big)^{q-1},
\end{align}
with $C>0.$ The premises above, the equation \eqref{eq:opint} and the Proposition \ref{decay},
in connection with an use of the inhomogenehouse Strichartz estimates  
bring to
\begin{align}\label{eq.1non4}
\|w\|_{L^{q}((T,t), \Win^{1,r}_x)}
\leq C\|w_0\|_{\Hin^1_x}+\eta(T)\left(\|w\|_{L^{q}((T,t),\Win^{1,r}_x)}\right)^{q-1},
\end{align}
where $\eta(T)\rightarrow 0$ as $T\rightarrow \infty.$ 

Thanks to the Lemma $7.7.4$ in \cite{Ca}, 
for $T$ large enough we have
$$
\|w\|_{L^{q}((T,t),\Win^{1,r}_x)}\leq \bar C,
$$
with the constant $\bar C$ independent from $t$. In that way we get that $w\in L^{q}((T,\infty), \Win^{1,r}_x),$ and one can use a similar argument in order to have $w\in L^{q}((-\infty, -T), \Win^{1,r}_x).$
From this fact we conclude immediately that  $w\in L^q(\R, \Win^{1,r}_x)$.
\end{proof}

\begin{proof}[Proof of Theorem \ref{thm:main}]
  The proof of Theorem \ref{thm:main} is now a straightforward adaptation of Theorem 7.8.1 and Theorem 7.8.4 in \cite{Ca}:
  we shortly prove it here  for the sake of completeness.
  \\
  {\em Asymptotic completeness:} Let us write $\overline w(t)=e^ {-it \Delta_{x}}w(t)$, we get
  \begin{align}\label{eq:opint1}
\overline w(t)=w_0 +i  \int_{0}^{t} e^ {-is \Delta_{x}}  g(w,p) ds,
\end{align}
  moreover one has, for $0<t<t_1$,
  \begin{align}\label{eq:opint2}
\overline w(t)-\overline w(t_1)=  i\int_{t_1}^{t} e^ {-is \Delta_{x}}  g(w,p)ds.
\end{align}
An application of classical Strichartz estimates yields
 \begin{align}\label{eq:stric}
\|\overline w(t)-\overline w(t_1)\|_{\Hin^1_x}\lesssim &
\\
\|e^ {it\Delta_{x}} (\overline w(t)-\overline w(t_1))\|_{\Hin^1_x}\lesssim \| g(w,p)\|_{L^{q'}((t,t_1), \Win^{1,r}_x)}&
\nonumber
\end{align}
with $(q, r)$ is Schr\"odinger-admissible pair as in \eqref{eq:pair}. Following the proof of Proposition 
\ref{StriSys} we achieve 
 \begin{align}\label{eq:stric1}
\lim_{t,t_1\rightarrow \infty}\|\overline w(t)-\overline w(t_1)\|_{\Hin^1_x}=0.
\nonumber
\end{align}
Thus we can say that there exist $(u_{1,0}^{\pm},\dotsc, u_{N,0}^\pm)\in H^1(\R^d)^N$ such that
exist $(u_1(t),\dotsc, u_N(t))\rightarrow (u_{1,0}^{\pm},\dotsc, u_{N,0}^\pm)$ in $H^1(\R^d)^N$ as $t\rightarrow\pm \infty.$
Notice also that, by Proposition \ref{ConsLaw}, we have also the following properties verified
\begin{equation}\label{eq:asco1}
\|(u_{1,0}^{\pm},\dotsc, u_{N,0}^\pm)\|_{\Lin^2_x}=\|(u_{1,0},\dotsc, u_{N,0})\|_{\Lin^2_x} \ \ \ \sum_{\mu=1}^N\int_{\R^d}|\nabla u_{\mu,0}^{\pm}| dx=E(u_{1,0},\dotsc, u_{N,0})
\end{equation}

  {\em Existence of wave operators:} Let us select a Schr\"odinger-admissible pair as in \eqref{eq:pair}
  and introduce $\upsilon(t)=e^{it\Delta_x}w^{+}_0$ (the proof for
   $w^{-}_0(t)$ is analogous). Then by the Strichartz estimates and 
  Corollary 2.3.7 in \cite{Ca} we get that, for $T>0$,
 \begin{equation} \label{eq:set0}
 \widetilde{\mathscr{H}}(T)=\|\upsilon(t)\|_{L^q([T, \infty], \Win^{1,r}_x)}+\sup_{t\geq T}\|\upsilon(t)\|_{\Lin^r_x},\end{equation}
 is a decreasing function w.r.t. the $T$ variable and such that $ \widetilde{\mathscr{H}}(T)\rightarrow 0$ as $T\rightarrow \infty.$
As a consequence we are allowed to introduce the complete metric space $Z\subseteq L^q([T, \infty), \Win^{1,r}_x)$ defined as
 \begin{align}\label{eq:set1}
Z=\big\{ w(t) : \|w(t)\|_{L^q((T, \infty), \Win^{1,r}_x)}+\sup_{t\geq T}\|w(t)\|_{\Lin^r_x}\leq 2\widetilde{\mathscr{H}}(T) \big\}
\end{align}
and equipped with the topology induced by $\|.\|_{L^q([T, \infty), \Lin^r_x)}.$
Let be
\begin{align}\label{eq:scat1}
\mathcal I (w)(t)=-i\int_{t}^{\pm\infty}e^{i(t-\tau)\Delta_x}g(u(\tau),v(\tau),p)d\tau,
\end{align}
with
$$
\mathcal I (w)\in C([T,\infty), \Hin^1_x)\cap L^q((T, \infty), \Win^{1,r}_x).
$$
 (see \cite{KT} or the Corollary 2.3.6 in \cite{Ca}, easily generalizable to a system of equations).
Furthermore, by the inequality (see Lemma \ref{StriSys})
$$
\| g(w,p)\|_{L^{q'}((T,\infty), \Win^{1,r}_x)}\leq C (\widetilde{\mathscr{H}}(T))^{2p+1}
$$
with $w=(u,v)\in Z,$ in combination with the behavior of $\widetilde{\mathscr{H}}(T)$ for $T$ large enough and the Sobolev embedding inequality we achieve the following estimate
\begin{equation} \label{eq:set2}
 \|w(t)\|_{L^q((T, \infty), \Win^{1,r}_x)}+\sup_{t\geq T}\|w(t)\|_{L^\infty((T, \infty), \Lin^r_x)}\leq\widetilde{\mathscr{H}}(T),
 \end{equation}
for $T$ large enough. By the estimate \eqref{eq:set0} and the \eqref{eq:set2} above we conclude that
the operator 
\begin{align}\label{eq:fixpoint}
\mathcal K(w)=e^{it\Delta_x}w_0^{+}+\mathcal I (w)
\end{align}
is a contraction on $Z$ with respect the norm $\|.\|_{L^q([T, \infty), \Win^{1,r}_x)}.$ By applying a fixed
point argument we get that there exists $w\in Z$ satisfying the equation \eqref{eq:fixpoint}. In addition
$w\in C([T,\infty), \Hin^1_x).$ By classical arguments one can show also that $w$ is a global solution to the equation \eqref{eq:nls} and then $w(0)=w_0\in \Hin^1_x$ is well defined.
Furthermore the properties \eqref{eq:scattering} is fulfilled. The proof of the remaining part regarding the uniqueness reads as in Theorem 7.8.4 in \cite{Ca}, 
so we skip it.
\end{proof}

\appendix
\section{A localized Gagliardo-Nirenberg inequality}
\label{appendix}
The principal target of this section is to prove of the localized inequality \eqref{eq:GNloc} used in the proof of
Proposition \ref{decay} (see Section \ref{decsol}). Albeit it already appeared
in the literature (see for example \cite{Vis}, \cite{L1,L2} or \cite{TeTzVi} in the context of product space $\R^d\times M,$ with $M^k$ any $k$-dimensional compact manifold), we recall it in a more general form. We have
\begin{proposition}\label{GNloc}
Let be $d\geq 1$ and $\alpha\in \N$, then for all vector-valued functions $\phi=(\phi_\ell)_{\ell=1}^\alpha \in H^1(\R^d)^\alpha$ one gets the following 
\begin{equation}\label{eq:GNlocN}
\|\phi\|_{L^{\frac{2d+4}{d}}(\R^d)^\alpha}^{\frac{2d+4}{d}}\leq C \left(\sup_{x\in \R^d} \|\phi\|_{ L^2(Q_x)^\alpha}\right)^{\frac{4}{d}} 
\|\phi\|^2_{ H^1(\R^d)^\alpha}.
\end{equation}
\end{proposition}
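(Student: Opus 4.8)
The plan is to reduce \eqref{eq:GNlocN} to the scalar case $\alpha=1$ and then to localize the classical Gagliardo--Nirenberg inequality by tiling $\R^d$ with unit cubes. First I would pass to the scalar function $v:=|\phi|=\big(\sum_{\ell=1}^\alpha|\phi_\ell|^2\big)^{1/2}$. By the diamagnetic inequality, which one checks by regularizing (replacing $v$ by $v_\varepsilon=\big(\varepsilon^2+\sum_\ell|\phi_\ell|^2\big)^{1/2}-\varepsilon$ and letting $\varepsilon\to0$), one has $v\in H^1(\R^d)$ with $|\nabla v|\le\big(\sum_\ell|\nabla\phi_\ell|^2\big)^{1/2}=|\nabla\phi|$ almost everywhere. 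Since $\|\phi\|_{L^q(\R^d)^\alpha}=\|v\|_{L^q(\R^d)}$ for every $q$, $\|\phi\|_{L^2(Q_x)^\alpha}=\|v\|_{L^2(Q_x)}$ for every $x$, and $\|v\|_{H^1(\R^d)}^2=\|v\|_{L^2}^2+\|\nabla v\|_{L^2}^2\le\|\phi\|_{H^1(\R^d)^\alpha}^2$, it is enough to prove \eqref{eq:GNlocN} for a single nonnegative $v\in H^1(\R^d)$.

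Next I would record the local estimate on the unit cube $Q$. Since $Q$ is a bounded Lipschitz domain and $q:=\tfrac{2d+4}{d}$ lies in the admissible range ($q\ge2$, and $q\le\tfrac{2d}{d-2}$ when $d\ge3$), the Gagliardo--Nirenberg inequality on $Q$ yields $\|v\|_{L^q(Q)}\le C\|v\|_{H^1(Q)}^{a}\|v\|_{L^2(Q)}^{1-a}$ with $a=d\big(\tfrac12-\tfrac1q\big)=\tfrac{d}{d+2}\in[0,1]$; alternatively one extends $v|_Q$ to $\widetilde v\in H^1(\R^d)$ with $\|\widetilde v\|_{L^2(\R^d)}\lesssim\|v\|_{L^2(Q)}$ and $\|\widetilde v\|_{H^1(\R^d)}\lesssim\|v\|_{H^1(Q)}$ and invokes the standard Gagliardo--Nirenberg inequality in $\R^d$. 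Because $aq=2$ and $(1-a)q=\tfrac4d$, raising to the power $q$ gives, with $C=C(d)$ independent of the position of the cube, $\|v\|_{L^{(2d+4)/d}(Q)}^{(2d+4)/d}\le C\|v\|_{L^2(Q)}^{4/d}\|v\|_{H^1(Q)}^2$.

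Then I would tile $\R^d$ by the essentially disjoint unit cubes $\{Q_j\}_{j\in\Z^d}$, each of which coincides with the cube $Q_{x_j}$ centered at its own center $x_j$, apply the previous bound on every $Q_j$, and sum:
\begin{align*}
\|v\|_{L^{(2d+4)/d}(\R^d)}^{(2d+4)/d}
&=\sum_j\|v\|_{L^{(2d+4)/d}(Q_j)}^{(2d+4)/d}
\le C\sum_j\|v\|_{L^2(Q_j)}^{4/d}\|v\|_{H^1(Q_j)}^{2}\\
&\le C\Big(\sup_j\|v\|_{L^2(Q_j)}\Big)^{4/d}\sum_j\|v\|_{H^1(Q_j)}^{2}
=C\Big(\sup_j\|v\|_{L^2(Q_j)}\Big)^{4/d}\|v\|_{H^1(\R^d)}^{2},
\end{align*}
using in the last step that the $Q_j$ cover $\R^d$ with disjoint interiors. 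Since $\sup_j\|v\|_{L^2(Q_j)}\le\sup_{x\in\R^d}\|v\|_{L^2(Q_x)}$, this is exactly \eqref{eq:GNlocN} for $v$, hence for $\phi$ via the first step.

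The argument is mostly bookkeeping, so the only point needing genuine care will be the reduction to $v=|\phi|$, i.e.\ verifying that $|\phi|\in H^1(\R^d)$ with $|\nabla|\phi||\le|\nabla\phi|$; this is the diamagnetic inequality, dispatched by the $\varepsilon$-regularization above. The remaining ingredients --- the Gagliardo--Nirenberg inequality on a cube and on $\R^d$, and the translation invariance that makes the cube constant uniform in $j$ --- are classical.
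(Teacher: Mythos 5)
Your proof is correct and follows essentially the same strategy as the paper: tile $\R^d$ by unit cubes, prove a local estimate of the form $\|v\|_{L^{(2d+4)/d}(Q)}^{(2d+4)/d}\le C\|v\|_{L^2(Q)}^{4/d}\|v\|_{H^1(Q)}^2$ on each cube, then sum and pull out the supremum of the local $L^2$ norms. The only (harmless) differences are that you first reduce to the scalar function $v=|\phi|$ via the diamagnetic/Kato inequality and obtain the local bound from the interpolation form of Gagliardo--Nirenberg uniformly in $d$, whereas the paper works componentwise and treats $d\ge3$, $d=2$ and $d=1$ separately through different Sobolev embeddings; both routes yield the same per-cube estimate and the same summation argument.
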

\begin{proof}
Consider an open covering of $\R^d$ given by a family of disjoint cubes $\{Q_s\}_{s\in\N}$.

Let us look at the high dimensional case $d\geq 3.$ For any component of $\phi=(\phi_1,\dots, \phi_\alpha)$ one has that $\phi_\ell\in L^{\frac{2d}{d-2}}, \, \ell=1,\dots,\alpha$, then the Sobolev embedding and 
an application of H\"older inequality bring to the chain of inequalities

\begin{align}\label{eq:GNlocN2}
\sum_{\ell=1}^\alpha\int_{Q_s}|\phi_\ell|^{\frac{2d+4}{d}}\leq C \sum_{\ell=1}^\alpha\left(\int_{Q_s}|\phi_\ell|^{2}\right)^{
\frac 2d} 
\left(\int_{Q_s}|\phi_\ell|^{\frac{2d}{d-2}}\right)^{\frac{d-2}{d}}&
\\
\leq C \sum_{\ell=1}^\alpha\left(\int_{Q_s}|\phi_\ell|^{2}\right)^{
\frac 2d} 
\left(\int_{Q_s}|\phi_\ell|^{\frac{2d}{d-2}}\right)^{\frac{2(d-2)}{2d}}\leq C(\alpha) \left(\sum_{\ell=1}^\alpha\|\phi_\ell\|_{ L^2(Q_s)}\right)^{
\frac 4d} 
\sum_{\ell=1}^\alpha\|\phi_\ell\|^2_{H^1(Q_s)}.&
\nonumber
\end{align}
The estimates above can be rewritten as 
\begin{equation}\label{eq:GNlocN3}
\|\phi\|_{L^{\frac{2d+4}{d}}(Q_s)^\alpha}^{\frac{2d+4}{d}}\leq C \left(\|\phi\|_{L^2(Q_s)^\alpha}\right)^{\frac{4}{d}} 
\|\phi\|^2_{H^1(Q_s)^\alpha},
\end{equation}
and hence summing over $s$ we arrive at
 \begin{align}\label{eq:GNlocN4}
\|\phi\|_{L^{\frac{2d+4}{d}}(\R^d)^\alpha}^{\frac{2d+4}{d}}\leq C \left(\sup_{s\in\N}\|\phi\|_{L^2(Q_s)^\alpha}\right)^{\frac{4}{d}} 
\sum_{s\in\N}\|\phi\|^2_{H^1(Q_s)^\alpha}&
\nonumber\\
\leq C \left(\sup_{s\in\N}\|\phi\|_{L^2(Q_s)^\alpha}\right)^{\frac{4}{d}} 
\|\phi\|^2_{H^1(\R^d)^\alpha},&
\end{align}
note the fact that the estimate above is translation invariant so the constants are independent from $s.$ Therefore the estimate \eqref{eq:GNlocN} follows from the above \eqref{eq:GNlocN4} in combination with the fact $\{Q_s\}_{s\in\N}\subset\{Q_x\}_{x\in\R^d}.$\\

The remaining cases, $d=1,2,$ can be handled in the same way as before with minor changes.
For $d=2$ we need to replace the estimate \eqref{eq:GNlocN2} by the following
\begin{align}\label{eq:GNlocN5}
\sum_{\ell=1}^\alpha\int_{Q_s}|\phi_\ell|^{4}
\\
\leq C \sum_{\ell=1}^\alpha\left(\int_{Q_s}|\phi_\ell|^{2}\right) 
\|\phi_\ell\|^2_{H^1(Q_s)}\leq C(\alpha) \left(\sum_{\ell=1}^\alpha\|\phi_\ell\|_{ L^2(Q_s)}\right)^{2} 
\sum_{\ell=1}^\alpha\|\phi_\ell\|^2_{H^1(Q_s)},&
\nonumber
\end{align}
which can be carried out taking $u=\abs{\phi_l}^2$ in the following Sobolev inequality
\begin{equation*}
  \norma{u}_{L^2(Q_s)}\lesssim \norma{u}_{L^1(Q_s)} + \norma{\nabla u}_{L^1(Q_s)},
\end{equation*}
 and by an use of Leibniz chain rule.
 Then one argues as in the proof for the higher dimensions case. For the last case, that is $d=1,$ we use instead of \eqref{eq:GNlocN5}
the following
\begin{align}\label{eq:GNlocN5bis}
\sum_{\ell=1}^\alpha\int_{Q_s}|\phi_\ell|^{6}
\\
\leq C \sum_{\ell=1}^\alpha\left(\int_{Q_s}|\phi_\ell|^{2}\right)^2 
\|\phi_\ell\|^2_{H^1(Q_s)}\leq C \left(\sum_{\ell=1}^\alpha\|\phi_\ell\|_{ L^2(Q_s)}\right)^{4} 
\sum_{\ell=1}^\alpha\|\phi_\ell\|^2_{H^1(Q_s)},&
\nonumber
\end{align}
which can be earned by using once again inhomogeneous Sobolev embedding (in details, $W^{1,1}_x \subset L^\infty_x$) and Leibniz chain rule. 
Then one continues as above.
\end{proof}
\begin{remark}\label{GNlocN3}
Following the paper \cite{Vis}, we can also obtain a variant of the inequality \eqref{eq:GNlocN} in the cases $d=1,2.$
The Sobolev embedding $W^{1,1}_x \subset L^2_x$ enables us to write the easy localized estimate
\begin{align}\label{eq:GNlocN5th}
\sum_{\ell=1}^\alpha\int_{Q_s}|\phi_\ell|^{3}
\\
\leq C \sum_{\ell=1}^\alpha\left(\int_{Q_s}|\phi_\ell|^{2}\right)^{\frac 12}
\||\phi_\ell|^2\|_{W^{1,1}(Q_s)}\leq C(\alpha) \left(\sum_{\ell=1}^\alpha\|\phi_\ell\|_{ L^2(Q_s)}\right) 
\sum_{\ell=1}^\alpha\|\phi_\ell\|_{H^1(Q_s)}^2,&
\nonumber
\end{align}
this fact, arguing as in the proof of the Lemma \ref{GNloc} brings to the estimate 
\begin{equation}\label{eq:GNloc12}
\|\varphi\|_{ L^{3}(\R^d)^\alpha}^{3}\leq C \left(\sup_{x\in \R^d} \|\phi\|_{ L^2(Q_x)^\alpha}\right)
\|\phi\|^2_{ H^1(\R^d)^\alpha},
\end{equation}
that is the inequality \eqref{eq:GNloc1}.
\end{remark}

{\it {\bf Acknowledgements:}}
The authors are grateful to Luca Fanelli and Nicola Visciglia for pointing this recent developments and for interesting and helpful discussions.

\end{document}